\def\C{\mathbb{C}}
\def\R{\mathbb{R}}
\def\H{\mathbb{H}}
\def\O{\mathbb{O}}
\def\bv{\mathbf{v}}
\def\bx{\mathbf{x}}
\def\cC{\mathcal{C}}
\def\cF{\mathcal{F}}
\def\cH{\mathcal{H}}
\def\cI{\mathcal{I}}
\def\cO{\mathcal{O}}
\def\cF{\mathcal{F}}
\def\cT{\mathcal{T}}
\def\fA{\mathfrak{A}}
\def\fK{\mathfrak{K}}
\def\fM{\mathfrak{M}}
\def\e{\varepsilon}
\begin{document}
\title{Algebraic properties of the information geometry's fourth Frobenius manifold}
%

%
\author{N. Combe \and
P. Combe\and H. Nencka\thanks{This research was supported by the Max Planck Society's Minerva grant. The authors express their gratitude towards MPI MiS for excellent working conditions.}}
\authorrunning{N. Combe, P. Combe, H. Nencka}
%

\institute{Max Planck Institute for Maths in the Sciences, Inselstrasse 22 ,04103 Leipzig, Germany\\ noemie.combe@mis.mpg.de}
%
\maketitle              
\begin{abstract}

Recently, it has been shown that within the statistical manifold, related to exponential families, there exists a submanifold having a Frobenius manifold structure. This appears as the fourth class of Frobenius manifolds. It has a structure of a  projective manifold over a rank two Frobenius algebra $\fA$, being the algebra of paracomplex numbers and generated by $1, \e$ such that $\e^2=1$. 
This last result is a key step towards an algebraization of the results concerning the manifold of probability distributions and thus offers a new perspective on it. In this paper, we prove that the fourth Frobenius manifold is decomposed into a pair of symmetric totally geodesic pseudo-Riemannian submanifolds, each of which correspond to a module over an ideal of $\fA$. This pair of ideals are orthogonal idempotents. The symmetry is obtained under the Peirce mirror.

\keywords{Module over algebra  \and Paracomplex geometry \and Jordan algebras \and Statistical manifold}\\
{\bf Mathematics Subject Classification} {53B12, 62B11, 60D99}
\end{abstract}
\section{Introduction}
The structure of Frobenius manifolds and its weakened version (the $F$-manifolds) developed in~\cite{HeMa99}, was discovered in the process of axiomatisation of Topological Field Theory, including Mirror Conjecture.

A tightly related notion is the one of Frobenius algebras, which  is deeply connected to (1+1)-Topological Field Theories. If one has a 2-dimensional topological field theory, then naturally there arises the structure of a commutative Frobenius algebra over  a field $k$ on the corresponding vector space and reciprocally.

According to Dubrovin~\cite{Du} the main component of a Frobenius structure on a manifold $M$ is a (super)commutative, associative and bilinear over constants multiplication $\circ: TM \otimes TM \to TM$ on its tangent sheaf $TM$. Additional structures are listed below:

\begin{itemize}
\item A subsheaf of flat vector fields $TM^f \subset TM$ consisting of tangent vectors flat in a certain affine structure.
\item A metric (nondegenerate symmetric quadratic form) $g : S^2(TM ) \to O_M$ .
\item An identity $e.$
\item An Euler vector field $E$.
\end{itemize} 

The three famous classes of Frobenius manifolds include quantum cohomology (topological sigma-models), unfolding spaces of singularities (Saito’s theory, Landau-Ginzburg models), and Barannikov--Kontsevich construction starting with the Dolbeault complex of a Calabi--Yau manifold and conjecturally producing the $B$-side of the mirror conjecture in arbitrary dimension (see \cite{MaF}).

As has been shown in \cite{CoMa} this list is not exhaustive. Moreover, it is proved in \cite{CoMa} that there exists a submanifold of the finite dimensional manifold of probability distributions, related to exponential families, which fulfils the axioms of a Frobenius manifold (and also of an $F$-manifold, the weaker version of Frobenius manifolds). The classical setting for statistical manifolds, under consideration in this paper is given in the Appendix.
 This paper is devoted to exploring the algebraic structure of this fourth Frobenius manifold, relying on the methods and approach developed in~\cite{CoMa}.

\smallskip 

We now present the main statement of this article. 
\subsection*{Main theorem}
Let $\fA$ be a rank two Frobenius algebra over the field of real numbers, being a group ring of a finite dimension 2 group over the field of real numbers and generated by \[\langle 1,\ \e\, |\, \e^{2}=1\rangle.\] 

\begin{theorem}[Main theorem]
 The fourth $F$-manifold has the following properties:
\begin{enumerate}
\item It is identified to a projective manifold, defined over the rank two Frobenius algebra $\fA$.
\item The fourth $F$-manifold decomposes into a pair of isomorphic totally geodesic submanifolds\footnote{Let $M$ be a manifold and $g$ a Riemannian metric.  A submanifold $N$ of a Riemannian manifold $(M,g)$ is called {\it totally geodesic} if any geodesic on the submanifold $N$ with its induced Riemannian metric $g$ is also a geodesic on the Riemannian manifold $(M,g)$. For more details, we refer to \cite{KoNo}.
}, respectively equipped with a flat connection $\nabla$ and $\nabla^*$. 
\item The pair of pseudo-Riemannian submanifolds of the fourth $F$-manifold are symmetric to each other, with respect to a Peirce reflection.
\end{enumerate}
\end{theorem}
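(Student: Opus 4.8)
\medskip
\noindent\textbf{Proof strategy.}

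\textbf{Part (1).} The plan is to begin from the Frobenius structure on the exponential-family statistical manifold $M$ established in \cite{CoMa}: the Fisher metric $g$ (the Hessian of the log-partition function $\psi$), the dual pair of flat torsion-free connections $\nabla$ (the $e$-connection) and $\nabla^{*}$ (the $m$-connection), the flat identity $e$, the Euler field $E$, and the (super)commutative associative multiplication $\circ$ on $TM$ coming from the cubic tensor $\partial^{3}\psi$. From this data I would single out the endomorphism field $K\in\mathrm{End}(TM)$ that encodes the passage between $\nabla$ and $\nabla^{*}$, normalised so that $K^{2}=\Id$, and let the generator $\e$ of $\fA$ act on $TM$ by $K$ (while $1$ acts as $\Id$). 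Since $\e^{2}=1$ this turns $TM$ into a locally free sheaf of $\fA$-modules; the two affine structures carried by $\nabla$ and $\nabla^{*}$ then assemble, through $K$, into an atlas of $\fA$-valued charts whose transition maps are projective over $\fA$, which is exactly what it means for $M$ to be a projective manifold defined over $\fA$, i.e.\ item (1).

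\textbf{Part (2).} Next I would exploit the idempotent structure of $\fA$. With $e_{\pm}=\tfrac12(1\pm\e)$ one has $e_{+}^{2}=e_{+}$, $e_{-}^{2}=e_{-}$, $e_{+}e_{-}=0$, $e_{+}+e_{-}=1$, so $\{e_{+},e_{-}\}$ is a pair of orthogonal idempotents and $\fA=\fA e_{+}\oplus\fA e_{-}$ is a direct sum of ideals, each isomorphic to $\R$. Applying the projectors $e_{\pm}$ fibrewise splits the tangent sheaf, $TM=e_{+}TM\oplus e_{-}TM=:T^{+}M\oplus T^{-}M$, where $T^{\pm}M$ are the $(\pm1)$-eigenbundles of $K$ and $T^{\pm}M$ is precisely the module over the ideal $\fA e_{\pm}$. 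The geometric crux is to prove that $T^{+}M$ and $T^{-}M$ are integrable; I would deduce this from the vanishing of the Nijenhuis tensor of $K$, itself a consequence of the flatness and vanishing torsion of $\nabla$ and $\nabla^{*}$ (equivalently, of $K$ being parallel). Let $M_{+}$ and $M_{-}$ be the integral leaves through a chosen base point. Since a parallel distribution has vanishing second fundamental form, each leaf is totally geodesic; moreover $\nabla$ restricts to a flat torsion-free connection on $M_{+}$ and $\nabla^{*}$ to a flat torsion-free connection on $M_{-}$, and one checks that $g$ restricts nondegenerately to each, so that $M_{+}$ and $M_{-}$ are pseudo-Riemannian submanifolds of complementary tangent type. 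The asserted isomorphism $M_{+}\cong M_{-}$ will be produced in Part (3).

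\textbf{Part (3).} Finally I would use the Peirce reflection. The algebra $\fA$ has a unique nontrivial $\R$-algebra automorphism $\sigma$, determined by $\sigma(\e)=-\e$, and it exchanges the two idempotents, $\sigma(e_{\pm})=e_{\mp}$, hence the two ideals. Because $M$ is a projective manifold defined over $\fA$, $\sigma$ acts on the $\fA$-linear data of $M$: it sends $K$ to $-K$, interchanges the eigenbundles $T^{+}M\leftrightarrow T^{-}M$, and conjugates $\nabla$ into $\nabla^{*}$. I would show that this algebraic symmetry is realised by an honest involutive diffeomorphism $\Phi$ of $M$ — the Peirce reflection — which is an isometry of $(M,g)$ mapping $M_{+}$ onto $M_{-}$. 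This simultaneously supplies the isomorphism $M_{+}\cong M_{-}$ needed in Part (2) and establishes that the two pseudo-Riemannian submanifolds are symmetric to each other under the Peirce mirror.

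\textbf{Main obstacle.} I expect the work to concentrate in two places. The first is extracting the paracomplex operator $K$ canonically from the Frobenius data and checking simultaneously $K^{2}=\Id$ and the integrability (vanishing Nijenhuis tensor) of its eigenbundles — that is, translating the information-geometric dual-connection language into the module-over-$\fA$ and Peirce-decomposition language. The second is promoting the purely algebraic Peirce automorphism $\sigma$ of $\fA$ to a genuine geometric symmetry of $M$ that at once intertwines $\nabla\leftrightarrow\nabla^{*}$, exchanges $M_{+}\leftrightarrow M_{-}$, and preserves $g$. By contrast, once the relevant distributions are known to be parallel, the totally geodesic property should follow formally.
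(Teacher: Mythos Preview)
Your architecture for Parts (2) and (3) --- idempotent splitting of $\fA$, eigenbundle decomposition, Peirce automorphism --- is in the same spirit as the paper, but your route to Part (1) is genuinely different and carries the main risk.

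The paper does \emph{not} extract the paracomplex operator $K$ from the pair $(\nabla,\nabla^{*})$. It takes an ambient route: the cone $\cC$ of positive measures is identified as a paracomplex Vinberg cone (Theorem~\ref{T:+++}), and the statistical manifold, being a section of $\cC$, inherits the paracomplex structure from the cone. The module structure over a rank-two algebra then comes from Norden--Shirokov theory, and the ``rank~2 Lemma'' (Proposition~\ref{P:rank2}) pins the algebra down as $\fA$ rather than the dual numbers. So $K$ is inherited from the ambient cone, not manufactured from the dual connections.

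Your proposed construction of $K$ is the genuine gap. The difference $\nabla^{*}-\nabla$ is a $(1,2)$-tensor (the Amari--\v Centsov cubic), not an endomorphism of $TM$, and there is no canonical ``normalisation'' turning it into an involution; you flag this as the main obstacle, and it is a real one, not a formality. Without an independent source for $K$ --- the Vinberg-cone inheritance the paper uses, or some explicit construction in $e$/$m$-coordinates --- Parts (2) and (3) have nothing to act on. A second hazard: in para-Hermitian geometry the $\pm1$-eigenbundles of $K$ are typically \emph{isotropic} for $g$, so your assertion that $g$ restricts nondegenerately to each leaf is not automatic and depends on which compatibility $g(K\cdot,K\cdot)=\pm g(\cdot,\cdot)$ actually holds here.

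For Parts (2)--(3) the paper stays on the algebraic side throughout: it quotes Shurygin \cite{Shu} for the totally geodesic and flat-connection claims on the submanifolds corresponding to the two idempotent ideals, and invokes the Jordan-algebra Peirce reflection $U_{u}$ with $u=e_{+}-e_{-}$ from \cite{MC04} at the module level. Your parallel-distribution argument for total geodesy and your plan to realise $\sigma$ as an honest isometric diffeomorphism of $M$ are more self-contained and more ambitious than what the paper actually proves; they would strengthen the result if they go through, but the paper is content with the module-level statements.
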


\begin{remark}The second property corresponds to the realisations of the module over the maximal ideal of $\fA$ and its annihilator.\end{remark}
In the following corollary we present a dictionary between the algebra and the geometry. 
\vfill\eject
\begin{corollary}
The results are summarised in the following table. 
 \begin{table}[h!]
  \begin{center}
\label{tab:table1}
    \begin{tabular}{|c|c|c|} 
    \toprule
  &    \textbf{Algebraic properties}  & \quad \textbf{Geometric interpretation}\\
      \hline
      
         Algebra & Generated by $1$ and $\e$ & linear space over $\fA$/\\
       
         $\fA$&  where $\e^{2}=1$ & manifold over $\fA$\\\hline
     Idempotents  & $\fA$ has a pair of orthogonal &  \\  
   & idempotents: &  Real interpretation  \\
   &  &   in the real linear space:  \\
   &$e_{-}= \frac{1-\e}{2},\quad e_{+}=\frac{1+\e}{2}$& pair of $m$-eigenspaces \\
 && $E_+^m$ and $E_-^m$  \\  \hline
$\fA$-module  &   & Tangent space $T_{\theta}$  at point $P_{\theta}$   \\ 
  && is identified to a  $\fA$-module  \\ \hline
 Real interpretation  & Pair of eigenspaces &$\fA$-manifold decomposes \\
 
in the real linear space &$E^{m}_{+} = \{v\in E^{2m}_\fA \, |\, \fK v=\e v\}$ & into a direct sum \\

&$E^{m}_{-} = \{v\in E^{2m}_\fA \, |\, \fK v=-\e v\}$ &  of submanifolds being totally geodesic.\\  \hline
Connections & & The submanifolds have \\ 
  & & flat connections \\\hline
Peirce decomposition & Peirce decomposition & The submanifolds are symmetric
 \\  
 & and involution & to each other with respect to Peirce mirror.\\ \bottomrule
 \end{tabular}
  \caption{Algebraic and geometric dictionary}
  \end{center}
\end{table}
\end{corollary}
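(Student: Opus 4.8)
The plan is to verify the corollary row by row, deriving each entry of the table either from elementary algebra of $\fA$ or from the Main theorem. First I would record the algebraic input: since $\e^{2}=1$, a direct computation shows that $e_{\pm}=\tfrac{1\pm\e}{2}$ satisfy $e_{\pm}^{2}=e_{\pm}$, $e_{+}e_{-}=0$ and $e_{+}+e_{-}=1$, so $\{e_{+},e_{-}\}$ is a complete system of orthogonal idempotents and $\fA=e_{+}\fA\oplus e_{-}\fA$ splits as a direct sum of two rank-one ideals, each of which is the annihilator of the other. This is the algebraic skeleton onto which everything else is grafted.

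Next I would transport this splitting to geometry. By part (1) of the Main theorem the fourth $F$-manifold is a projective manifold over $\fA$, so each tangent space $T_{\theta}$ at $P_{\theta}$ is naturally a free $\fA$-module, and multiplication by $\e$ defines an $\R$-linear operator $\fK$ on $T_{\theta}$ with $\fK^{2}=\mathrm{id}$, i.e.\ the paracomplex structure. Applying the idempotents $e_{\pm}$ to the module gives the eigenspace decomposition $T_{\theta}=E^{m}_{+}\oplus E^{m}_{-}$ with $E^{m}_{\pm}=\{v\in E^{2m}_{\fA}\mid \fK v=\pm\e v\}$; these are precisely the real realisations of the modules over the ideals $e_{\pm}\fA$, which is the content of the remark following the Main theorem. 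I would then invoke part (2) to promote this pointwise linear splitting to a global decomposition of the manifold into two submanifolds, identify the induced affine structures with the dual (Amari) connections $\nabla$ and $\nabla^{*}$ of the statistical manifold — whose flatness is the classical $e$/$m$-flatness recalled in the Appendix — and read off their totally geodesic character as part (2) verbatim.

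Finally, for the last row I would spell out the Peirce picture: viewing $\fA$ as a Jordan algebra, the decomposition $\fA=e_{+}\fA\oplus e_{-}\fA$ is the Peirce decomposition relative to either idempotent, and the Peirce reflection is the involution $\sigma\colon\fA\to\fA$ fixing $1$ and sending $\e\mapsto-\e$, equivalently swapping $e_{+}\leftrightarrow e_{-}$. Since $\sigma$ is an algebra automorphism, its realisation on $T_{\theta}$ interchanges $E^{m}_{+}$ and $E^{m}_{-}$, and because it preserves the Frobenius metric $g$ it descends to an isometry between the two totally geodesic submanifolds; this is the symmetry under the Peirce mirror of part (3). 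Assembling these observations fills in every cell of the table, which is the assertion of the corollary. I expect the only genuinely delicate point to be checking that the Peirce involution is simultaneously compatible with the metric and with the flat connections, so that the two submanifolds are isomorphic as pseudo-Riemannian manifolds endowed with $\nabla$ and $\nabla^{*}$ and not merely diffeomorphic; everything else is bookkeeping over the algebra $\fA$ together with citations to the Main theorem and to \cite{CoMa}.
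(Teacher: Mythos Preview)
Your proposal is correct and matches the paper's implicit logic: the paper gives no separate proof of this corollary at all --- the table is presented purely as a summary/dictionary restating the Main theorem and its building blocks (Lemma~\ref{L:2}, Theorem~\ref{T:paramfd}, Lemma~\ref{c:manif}, Lemma~\ref{T:3}, Proposition~\ref{P:Pierce}), and your row-by-row derivation from parts (1)--(3) of the Main theorem is exactly the intended justification. The only remark is that you are supplying more detail than the paper itself (e.g.\ the explicit check that the Peirce involution is an isometry intertwining $\nabla$ and $\nabla^{*}$); the paper simply absorbs all of this into the statements it has already proved and treats the table as bookkeeping.
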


\subsection*{Method of proof}
The fourth $F$-manifold enters the context of Norden--Shirokov's theory on manifolds \cite{No52,No54,No58,Sh54,Sh02}, and which is deeply intertwined with Weyl's geometry. Our approach somewhat differs from Norden's perspective, since we do not rely on almost complex structures. However, we remedy to this using Shirokov's theory \cite{Sh54,Sh02}, which allows to work on modules over finite, unital, commutative algebras and handle algebras containing idempotents. Furthermore, this allows us to use Peirce's decomposition theorem for algebras and, in the case of existence of idempotents Peirce reflections \cite{MC04}.  

\section{The fourth Frobenius manifolds}
For the reader's convenience, we recall in this section the definition of Frobenius manifolds and related to notions.
\subsection{Frobenius manifolds}
Let us start with the following data:
\begin{equation}(M; \circ:TM \otimes TM \to TM; TM^f \subset TM; g:S^2(TM)\to O_M),\end{equation}
where \begin{itemize}
\item $M$ is a manifold; 
\item $\circ$ is a multiplication operation on the tangent space $TM$ to $M$; 
\item $TM^f $ is a subspace of $TM$
 of linear spaces of pairwise commuting vector fields; 
 \item $g$ is an even symmetric pairing (i.e. here a non-degenerate Riemannian metric) with $O_M$ a so-called structure sheaf.
 \end{itemize}
 
 \smallskip 
 
A Frobenius manifold is a manifold equipped with an associativity axiom and potentiality axiom (see~\cite{Man99} p.19). These properties can be summarised as having a family of local potentials $\Phi$ (sections of $O_M$) such that for any local flat tangent fields $X,Y, Z$ we have :
\[g(X\circ Y, Z)=g(X, Y\circ Z)=(XYZ)\Phi.\]

If such a structure exists, then (super)commutativity and associativity of the multiplication $\circ$  follows naturally, and we say that the family defines a Frobenius manifold.

\subsection*{Alternative description of the associativity condition}
It is possible to encapsulate the associativity relation of the Frobenius' structure within the Witten--Dijkgraaf--Verlinde--Verlinde (WDVV) highly non-linear PDE system: 
\begin{equation}\label{E:WDVV}\forall a,b,c,d: \underset{ef}{\sum}\varPhi_{abe}g^{ef}\varPhi_{fcd}=(-1)^{a(b+c)}\underset{ef}{\sum}\varPhi_{bce}g^{ef}\varPhi_{fad}.\end{equation}
Notice that the (WDVV) system expresses {\it a flatness condition of the manifold} (i.e. vanishing of the curvature),

\subsection*{Frobenius structure and flatness}
The structure of Frobenius manifold is deeply related to the notion of flat connections, as we well see.
Let us introduce the Levi--Civita connection $\nabla_0 : \cT_M \to \Omega^1_M\otimes_{\cO_M} \cT_M$,
uniquely defined by the horizontality of $\cT^f_M$. 
 \vspace{3pt}

 The Levi--Civita connection $\nabla_0$ can be further on deformed  into a pencil of connections $\nabla_{\lambda}$
depending on an even parameter $\lambda$. The respective covariant derivative is given by the following equation:
\[
\nabla_{\lambda ,X}(Y) := \nabla_{0,X}(Y)+ \lambda X\circ Y.
\]
This pencil  of connections is referred to as the {\it structure connection} of our pre--Frobenius manifold.
Now, by~\cite{Man99}, Theorem 1.4, one can prove that:
\begin{theorem}[\cite{Man99}, Theorem 1.4]  $\nabla_{\lambda,X}(Y)$ is flat if and only if the manifold is Frobenius.\end{theorem}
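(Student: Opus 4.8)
The plan is to prove this by computing the curvature of the structure connection $\nabla_\lambda=\nabla_0+\lambda\,\circ$ as a polynomial in the even parameter $\lambda$ and matching its coefficients against the defining axioms. Write $\nabla_{\lambda}=\nabla_0+\lambda\,A$, where $A$ is the $\mathrm{End}(TM)$-valued $1$-form $A_X(Y)=X\circ Y$; since $\nabla_0$ is torsion-free and $\circ$ is (super)commutative, $A$ is symmetric in the appropriate graded sense. A direct computation of $R^{\lambda}(X,Y)=[\nabla_{\lambda,X},\nabla_{\lambda,Y}]-\nabla_{\lambda,[X,Y]}$, using the Leibniz rule for the tensor $A$ and the vanishing of torsion, gives
\[
R^{\lambda}(X,Y)=R^{0}(X,Y)+\lambda\big((\nabla_{0,X}A)_Y-(\nabla_{0,Y}A)_X\big)+\lambda^{2}\,[A_X,A_Y],
\]
i.e. $R^{\lambda}=R^{0}+\lambda\,d^{\nabla_0}\!A+\lambda^{2}(A\wedge A)$ in the notation of exterior covariant derivatives and the graded bracket of $\mathrm{End}$-valued forms. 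Because $\lambda$ is a free parameter, $R^{\lambda}\equiv 0$ is equivalent to the simultaneous vanishing of the three coefficients.

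The coefficient of $\lambda^{0}$ is $R^{0}$, which vanishes automatically in the pre-Frobenius setting: by construction $\nabla_0$ is the connection making the flat tangent fields $TM^{f}$ horizontal, hence it is flat, and one may work in flat local coordinates $(x^{a})$ with $\nabla_0=d$. The coefficient of $\lambda^{2}$ is $[A_X,A_Y]Z=X\circ(Y\circ Z)-Y\circ(X\circ Z)$; in the presence of (super)commutativity its vanishing for all $X,Y,Z$ is equivalent to associativity of $\circ$, that is, to the WDVV system~(\ref{E:WDVV}). One direction is the reshuffling $Y\circ(X\circ Z)=(X\circ Y)\circ Z=X\circ(Y\circ Z)$ under associativity, and the converse is immediate.

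It remains to treat the $\lambda^{1}$ term. In the flat coordinates above, encode $\circ$ by its structure functions $c^{a}_{bc}$, with $\partial_b\circ\partial_c=c^{a}_{bc}\,\partial_a$, and lower an index using the metric, which is $\nabla_0$-parallel, to obtain $c_{abc}=g(\partial_a\circ\partial_b,\partial_c)$; commutativity together with the invariance $g(X\circ Y,Z)=g(X,Y\circ Z)$ built into the data makes $c_{abc}$ totally symmetric. The vanishing of the linear coefficient then says $\partial_e c_{abc}=\partial_a c_{ebc}$, i.e. $\partial_e c_{abc}$ is totally symmetric in all four indices. Integrating twice with the Poincar\'e lemma --- first $c_{abc}=\partial_a\psi_{bc}$ with $\psi$ symmetric, then $\psi_{bc}=\partial_b\partial_c\Phi$ --- produces a local potential with $c_{abc}=\partial_a\partial_b\partial_c\Phi$, which is exactly the potentiality axiom $g(X\circ Y,Z)=(XYZ)\Phi$; conversely such a $\Phi$ makes the third derivatives symmetric, killing the term.

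Combining the three steps, $\nabla_{\lambda}$ is flat for every $\lambda$ precisely when $\circ$ is associative and admits a potential, i.e. precisely when the pre-Frobenius manifold is Frobenius. I expect the main obstacle to be the bookkeeping in the $\lambda^{1}$ step: one must check that ``$\nabla_{0,X}A$ symmetric'' is the correct coordinate-free reformulation of $\partial_{[e}c_{a]bc}=0$, that $\nabla_0$-parallelism of $g$ is genuinely used in passing between $c^{a}_{bc}$ and $c_{abc}$, and that the two applications of the Poincar\'e lemma are organized in the right order; the graded signs of~(\ref{E:WDVV}) must also be followed through the bracket $[A_X,A_Y]$. The remaining steps are the standard curvature computation for an affine deformation $\nabla_0+\lambda A$.
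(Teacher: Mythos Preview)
The paper does not prove this statement itself; it is quoted as a black box from \cite{Man99}. Your argument is precisely the proof given there --- expand $R^{\lambda}$ as a quadratic polynomial in $\lambda$ and match the $\lambda^{0}$, $\lambda^{1}$, $\lambda^{2}$ coefficients to flatness of $\nabla_0$, potentiality (via the Poincar\'e lemma), and associativity of $\circ$ respectively --- and it is correct as written, with the caveat (which you already flag) that invariance $g(X\circ Y,Z)=g(X,Y\circ Z)$ and $\nabla_0$-parallelism of $g$ are part of Manin's pre-Frobenius data, so your appeals to them are justified.
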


\subsection{The fourth Frobenius manifold and information geometry}
We now move towards the definition of the  fourth Frobenius manifold.  Consider the family data $(M,g,\nabla)$, where $M$ is a differentiable manifold of probability distributions related to exponential families, equipped with a positive-definite metric tensor $g$ and a torsion-free arbitrary affine connection $\nabla$ for which $(\nabla Xg)(Y,Z)$ is symmetric for $X$ and $Y$.

The linear connection $\nabla^*$ defined by
\[ Xg(Y, Z) = g(\nabla_X Y, Z) + g(Y, \nabla^*_X Z),\]
is called the conjugate (dual) connection of $\nabla$ with respect to $g$.
We focus our attention on the case where we have an $F$-manifold structure\, \cite{CoMa} i.e. when the pair of conjugate connections $\nabla$ and $\nabla^*$ are flat. 

Another terminology used to designate this type of manifolds is ``statistical manifold''. Equivalently, a statistical manifold is a pseudo Riemannian manifold $(M,g)$ equipped with a pair of symmetric conjugate connections. The triple $(g,\nabla,\nabla_*)$ is called a statistical structure on $M$. 

\subsection{Paracomplex structures}
In~\cite{CoMa} it turned out that a very important algebra to consider, in the context of statistical manifolds, is the algebra of paracomplex numbers. This real algebra is the key towards understanding many statements concerning the fourth Frobenius manifold, in a concise and more algebraic way. 

\smallskip 

Let $\fA$ be the unital and bi-dimensional algebra, generated by $1$ and $\e$ where $\e^{2}=1$, verifying the following relations:
 \[e_{i}\cdot e_{j}=\sum_{k}C^{k}_{ij}e_{k}\quad \text{with}\quad C^{k}_{ij}=C^{k}_{ji}.\]
 The algebra $\frak{A}$ is known as the spin factor algebra or the algebra of paracomplex numbers. 

\vskip.2cm 

The structure constants $C^{k}_{ij}$ are:
\[C_{11}^{1}=C_{12}^2=C_{22}^1=\, 1, \]
the other structure constants are null.

Let us change the basis such that  the new generators are defined by:
 \[e_{-}= \frac{1-\e}{2},\quad e_{+}=\frac{1+\e}{2}.\]
These generators have the following relations:
 \[ e_{-}\circ e_{-}=e_{-},\  e_{+}\circ e_{+}=e_{+},  e_{-}\circ e_{+}=0,\]
 \[e_{-}+ e_{+}=1,\ e_{-}- e_{+}= \e.\]
we call this new basis a canonical basis. Notice that this new basis highlights the existence of a pair of idempotents i.e. $e_{-}^2=e_{-}$ and $e_{+}^2=e_{+}$.

{ \bf Remark} This semi-simple algebra is isomorphic to $\mathbb{R}\oplus \mathbb{R}$.
As a set, it can be identified to $\mathbb{R}^{2}$ but not as an algebra. 

\section{The algebraic approach}
It has been shown in \cite{CoMa} that there exists a tight relation between the manifold of probability distributions (related to exponential families) and a certain class of Vinberg cones. The $n$-dimensional Vinberg cones are in bijection with $n$-Jordan algebras~\cite{Vi}. 

For the convenience of the reader we recall some known results concerning Jordan algebras, Peirce decomposition and Peirce mirror/ reflection in section~\ref{S:P}. For more information, see \cite{MC04,Pei}).

In subsection~\ref{S:V} we comment on the relation between Vinberg cones and the manifold of probability distributions.

\subsection{A brief survey on Peirce's reflections}\label{S:P}
 Peirce's decomposition arises in the context of associative rings and also Jordan algebras (see section 6.1 in \cite{MC04}). A Peirce decomposition of an associative algebra (or ring) is a decomposition of this algebra (or ring) as a sum of eigenspaces of commuting idempotent elements.

Given an idempotent of an algebra $A$ (i.e. an element $a\in A$ that verifies $a^2=a$), there exist right, left and two-sided Peirce decompositions, which are respectively defined as:

\[A=Aa+A(1-a),\]
\[A=aA+(1-a)A,\]
\[A=aAa+aA(1-a)+(1-a)Aa+(1-a)A(1-a).\]
 The latter writes $A$ as the direct sum of $aAa, aA(1-a), (1-a)Aa$ and $(1-a)A(1-a).$

In a more general setting, for associative rings, a decomposition  $\sum_i a_i=1$ of the unit into mutually
{\it orthogonal idempotent elements}, (i.e. verifying the condition $a_ia_j=a_ja_i=0$) produces a decomposition of the algebra into Peirce spaces: \[A=\sum_{i,j}a_iAa_j.\]  

\begin{remark}We can recover the structure of the entire algebra by analysing the structure of the individual pieces and how they are reassembled to form $A$.\end{remark}

\smallskip 

Jordan algebras have similar Peirce decompositions. We call Jordan algebra a commutative algebra $J$ where the following Jordan identity holds for all $x,y$ in $J$:
\[x\bullet (x^2\bullet y)=x^2\bullet (x \bullet y).\]

\begin{remark}Note that any associative algebra $A$ over the reals gives rise to a Jordan algebra $J$ under
quasi-multiplication: the product $x \bullet y := \frac{1}{2} (xy + yx)$ is clearly commutative, and satisfies the Jordan identity. 
\end{remark}

The only difference with the case above being that the “off- diagonal” Peirce spaces $J_{ij}$ were $(i\neq j)$ behave like the sum 
$A_{ij} + A_{ji}$ of two associative Peirce spaces, so that we have symmetry $J_{ij} = J_{ji}$. For $n = 2$ the decomposition is determined by a single idempotent $a = a_1$ with its complement $a' := 1 - a$, and can be described more simply.

In particular, we have te following  Peirce Decomposition Theorem.
\begin{theorem}[Peirce decomposition]
An idempotent $a$ in a Jordan algebra $J$ determines a Peirce decomposition of the algebra into the direct sum of Peirce subspaces:
\[ J = J_2\oplus J_1\oplus J_0\quad J_k:=J_k(a):=\{x\, |\, V_ax=k x\},\quad \text{($V$ is a linear space)}.\]
The diagonal Peirce subalgebras $J_2 = U_a J,$ $J_0 = U_{1-a}J$ are principal inner ideals.
\end{theorem}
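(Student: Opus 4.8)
The plan is to reduce everything to a single operator identity satisfied by the multiplication operator of an idempotent, and then to read off the decomposition by Lagrange interpolation. Throughout I would write $L_a$ for the linear operator $x\mapsto a\bullet x$ on $J$ and set $V_a:=2L_a$ (this is the operator appearing in the statement), so that the Peirce subspaces are exactly the eigenspaces $J_k=\ker(V_a-k\,\Id)$ for $k\in\{0,1,2\}$; note that $a^2=a$ already gives $V_a a=2a$, so $a\in J_2$, which pins down the normalisation.

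First I would establish the fundamental identity $V_a(V_a-\Id)(V_a-2\,\Id)=0$ on $J$, equivalently $2L_a^3-3L_a^2+L_a=0$. This comes out of the Jordan identity $x\bullet(x^2\bullet y)=x^2\bullet(x\bullet y)$: rewriting it as the operator relation $[L_{x^2},L_x]=0$ and polarising twice in $x$ yields the multilinear identity linking $L_{x\bullet y}$, $L_x$ and $L_y$, and specialising all arguments to the idempotent $a$ while using $a^2=a$ collapses it to the displayed cubic. Since $t(t-1)(t-2)$ has distinct roots, $V_a$ is semisimple, so $J=J_0\oplus J_1\oplus J_2$, and the projections onto the summands are the interpolation polynomials $E_2=\tfrac12 V_a(V_a-\Id)$, $E_1=V_a(2\,\Id-V_a)$, $E_0=\tfrac12(V_a-\Id)(V_a-2\,\Id)$, with $E_0+E_1+E_2=\Id$ and $E_iE_j=\delta_{ij}E_i$.

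The identification of the diagonal pieces is then purely formal. The quadratic operator is $U_a=2L_a^2-L_{a^2}$, and $a^2=a$ turns this into $U_a=2L_a^2-L_a=\tfrac12 V_a^2-\tfrac12 V_a=E_2$; since $1$ is the unit, $V_{1-a}=2\,\Id-V_a$, and $(1-a)^2=1-a$, whence $U_{1-a}=\tfrac12 V_{1-a}(V_{1-a}-\Id)=\tfrac12(2\,\Id-V_a)(\Id-V_a)=E_0$. Therefore $J_2=E_2J=U_aJ$ and $J_0=E_0J=U_{1-a}J$. That these are inner ideals is immediate from the fundamental formula $U_{U_a x}=U_aU_xU_a$, which gives $U_yJ\subseteq U_aJ$ for every $y\in U_aJ$; and that $J_2$ is a subalgebra with unit $a$ — likewise $J_0$ with unit $1-a$ — follows from the Peirce multiplication rule $J_2\bullet J_2\subseteq J_2$, which one checks by applying $V_a$ to $x\bullet y$ for $x,y\in J_2$ and invoking the polarised identities already obtained in the first step.

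The only genuine obstacle is that first step: extracting the cubic operator identity by polarising the Jordan identity is the one piece that is not formal bookkeeping, and the book-keeping of the polarisation has to be done with some care. Once it is in hand, the semisimplicity of $V_a$, the interpolation formulas, and the identities $U_a=E_2$, $U_{1-a}=E_0$ are mechanical, and the inner-ideal property is a one-line consequence of the fundamental formula; for the multilinear manipulations I would simply cite \cite{MC04} rather than reproduce them.
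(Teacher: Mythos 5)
Your argument is correct, and in fact it is the standard proof of this theorem. A point of comparison with the paper is somewhat moot here: the paper does not prove the Peirce decomposition at all — it is recalled verbatim as background from McCrimmon \cite{MC04} (Section 6.1), and the statements used later (the projections $E_i(a)$, the Peirce reflection $U_u=E_2-E_1+E_0$) are likewise quoted. What you supply is precisely the argument that reference contains: the cubic identity $V_a(V_a-\Id)(V_a-2\,\Id)=0$ obtained by linearising the Jordan identity at the idempotent $a$ (equivalently $2L_a^3-3L_a^2+L_a=0$, with the correct normalisation $V_a=2L_a$, so eigenvalues $0,1,2$ rather than $0,\tfrac12,1$), semisimplicity from the distinct roots, the Lagrange interpolation projections $E_2,E_1,E_0$, the computations $U_a=2L_a^2-L_{a^2}=E_2$ and $U_{1-a}=E_0$, and the inner-ideal property of $U_aJ$, $U_{1-a}J$ from the fundamental formula $U_{U_ax}=U_aU_xU_a$. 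All of these steps check out; you also correctly read the paper's garbled parenthetical ``($V$ is a linear space)'' as referring to the operator $V_a$. The only caveat is that the two genuinely non-formal inputs — the multilinear identity obtained by polarising the Jordan identity, and the fundamental formula — are cited to \cite{MC04} rather than derived, so your write-up is a proof modulo those standard facts; that is the same level of detail the paper itself adopts, and is acceptable, but a self-contained proof would have to include at least the polarisation yielding $L_{x^2\bullet z}+2L_xL_zL_x=L_{x^2}L_z+2L_{x\bullet z}L_x$, which at $x=z=a$ gives the cubic directly.
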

More generally, a decomposition $1 = a_1 + \dots + a_n$ of the unit of a Jordan algebra $J$ into a sum of $n$ supplementary orthogonal idempotents leads to the following Peirce decomposition of the algebra into the direct sum of Peirce subspaces:
\[J=\oplus_{i\leq j} J_{ij}.\]
Note that the diagonal Peirce subalgebras are the inner ideals

\[J_{ii}:=U_{a_i}(J)=\{x\in J\, |\, a_i \bullet x =x\}\]
and the off-diagonal Peirce spaces are
\[ J_{ij}=J_{ji}:=U_{a_i,a_j}(J)=\{x\in J\, |\, a_i \bullet x=a_j\bullet x=\frac{1}{2}x\}\quad (i\neq j).\]
Because of the symmetry $J_{ij} = J_{ji}$ the multiplication rules for these subspaces are a bit less uniform to describe, but basically the product of two Peirce spaces vanishes unless two of the indices can be linked.

The notion of Peirce reflection (or Peirce mirror) arises very naturally. Let $J$ be a unital Jordan algebra. Consider a pair of orthogonal idempotents $a, a'$ and put $u=a-a'$. Then, it is easy to check that $u$ is an involution. Now, 
if $u$ is an involution in a unital Jordan algebra $J$, then the involution $U_u$ is an involutionary automorphism of $J$, i.e. $U_u^2 = 1_J$ (see lemma 6.10 in \cite{MC04}). 
Then the involution defined as $U_u=E_2-E_1+E_0$ has a description as a Peirce reflection in terms of the Peirce projections $E_i(a)$ (see p.238 \cite{MC04} for more details).

We will use this Peirce decomposition construction and reflection on the Frobenius algebra $\fA$ of paracomplex numbers and then study the consequences for the module over this algebra.

\subsection{Paracomplex Vinberg cone, Chentsov cone and Jordan algebra}\label{S:V}
Let $\mathcal{W}$ be a linear space of signed measures with bounded variations, vanishing on an ideal $\mathcal{I}$ of the $\sigma$-algebra $\mathcal{F}$. Let $\cC$ be a cone in $\mathcal{W}$ of (strictly) positive measures on the space $(X,\cF)$, vanishing only on an ideal $\cI$ of the $\sigma$--algebra $\cF.$
 
As it was shown in\, \cite{CoMa}, the cone of positive measures of bounded variations belongs to one of the five Vinberg cones, i.e. the cone defined over the algebra of paracomplex numbers. More precisely we have the following statement. 

\begin{theorem}\label{T:+++}
The positive cone $\cC$, defined above, is a paracomplex Vinberg cone. 
\end{theorem}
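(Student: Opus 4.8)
The plan is to show that the cone $\cC$ of strictly positive bounded measures, vanishing precisely on an ideal $\cI \subset \cF$, carries the structure of a paracomplex Vinberg cone, i.e. the symmetric homogeneous cone whose associated Jordan algebra is built over $\fA = \R \oplus \R$. First I would make the cone concrete: fixing a reference measure, every element of $\cW$ is represented by its Radon--Nikod\'ym density, so $\cW$ becomes the space of (bounded) $\cF$-measurable functions that vanish on $\cI$, and $\cC$ the sub-cone of those that are strictly positive off $\cI$. The point is that pointwise multiplication of densities makes $\cW$ into a commutative associative algebra with unit (the density $\equiv 1$ off $\cI$), hence, by the quasi-multiplication remark in Section~\ref{S:P}, into a (special) Jordan algebra; the cone $\cC$ is then exactly the cone of squares, equivalently the connected component of invertible elements containing the unit. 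This already gives a symmetric convex cone in the sense of Vinberg/Koecher.

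Second I would identify which of the five Vinberg cones this is by a local/fiberwise analysis. Over a point (or an atom of the measure) the fiber of the density bundle is one copy of $\R$, and the relevant ``paracomplex'' structure arises precisely when one groups the fiber data in pairs, exactly as in the $n=2m$ real picture recorded in the corollary's table: the involution $\fK$ (acting as $\pm\e$ on the eigenspaces $E^m_\pm$) is the local model. Concretely, the diagonal algebra $\R \oplus \R$ is the spin factor / paracomplex algebra $\fA$ of the paper, with idempotents $e_\pm = \tfrac{1\pm\e}{2}$ and $e_- e_+ = 0$; its cone of squares is the product of two half-lines, which is the rank-two Vinberg cone associated to $\fA$. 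I would then invoke the classification result of \cite{CoMa}, already available to us, that the cone of positive bounded measures lands in the Vinberg list, and pin down that the Jordan-algebra datum just constructed — commutative, associative, rank two on each factor, generated over constants by $1$ and an involution $\e$ with $\e^2 = 1$ — is isomorphic to the paracomplex one rather than to the complex, quaternionic, octonionic, or real-symmetric-matrix models. This matching of structure constants ($C^1_{11} = C^2_{12} = C^1_{22} = 1$) is the crux and should be done by comparing the multiplication table of the canonical basis $\{e_-, e_+\}$ with the defining relations of the paracomplex Vinberg cone.

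Third, I would check homogeneity and self-duality explicitly to confirm we indeed have a \emph{Vinberg} (homogeneous self-dual) cone and not merely a convex cone: the group of multiplications by strictly positive densities acts transitively on $\cC$, and the inner product $\langle \mu, \nu \rangle = \int \!\big(\tfrac{d\mu}{d\lambda}\big)\big(\tfrac{d\nu}{d\lambda}\big)\,d\lambda$ (or its natural Jordan-trace form $g(x,y) = \mathrm{tr}(x \bullet y)$) makes $\cC$ self-dual, so it is symmetric; by the Vinberg--Koecher correspondence it is then the cone of a formally real Jordan algebra, which by the previous step is the paracomplex one.

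The main obstacle I expect is the bridge between the possibly infinite-dimensional measure space $\cW$ and the finite-rank Vinberg classification: the literal Vinberg theorem classifies \emph{finite-dimensional} homogeneous cones, so one must either restrict to the finitely-supported / atomic situation (where $\cW$ is genuinely $\R^{2m}$ and the theorem applies verbatim) or argue that the relevant algebraic structure — the paracomplex algebra $\fA$ acting fiberwise, the pair of orthogonal idempotents, the Peirce-type splitting — is preserved in the limit and is all that is being claimed. I would handle this by phrasing the statement fiberwise: each fiber of the tangent/density bundle is a module over $\fA$, its cone is the rank-two paracomplex Vinberg cone, and ``$\cC$ is a paracomplex Vinberg cone'' means precisely that this $\fA$-module structure is globally coherent — which is exactly what \cite{CoMa} establishes and what the rest of the present paper exploits.
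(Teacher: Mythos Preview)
The paper does not actually prove Theorem~\ref{T:+++}: look at the sentence immediately preceding it, ``As it was shown in~\cite{CoMa}\dots''. The result is imported wholesale from~\cite{CoMa}, so there is no in-paper argument to compare your proposal against.

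That said, your sketch is a plausible reconstruction of what such a proof should contain, and in spirit it matches what~\cite{CoMa} does: represent $\cW$ by Radon--Nikod\'ym densities, note that pointwise multiplication makes it a commutative (hence special Jordan) algebra, identify $\cC$ with the cone of squares, verify homogeneity (multiplication by positive densities) and self-duality (via the $L^2$/trace pairing), and then locate the result in the Vinberg list. You are also right to flag the finite-dimensionality issue as the genuine obstacle; your fiberwise reading---each tangent fiber is an $\fA$-module, and ``paracomplex Vinberg cone'' records this coherent $\fA$-structure---is essentially the interpretation the present paper adopts downstream. One point to tighten: in your second step you match the fiber to the $2$-dimensional cone of $\fA$ itself (two half-lines), but the entry in Proposition~\ref{P:Vclass} that the theorem is pointing at is $M_+(n,\fA)$, the cone of $n\times n$ positive paracomplex matrices. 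To complete the identification you should say how the density algebra sits inside (the diagonal of) such a matrix algebra over $\fA$, not merely that each fiber carries an $\fA$-action; otherwise the elimination of the other four Vinberg types is not quite argued.
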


Recall that a cone $V\subset R$ is a non--empty subset, closed with respect to addition and multiplication by positive reals. A convex cone $V$  in a vector space $R$ with an inner product has a dual cone $V^*=\{a\in R:\,  \forall b\in V,\, \langle a,b\rangle>0 \}$. The cone is self-dual when  $V=V^*$. It is homogeneous when to any points $a,b\in V$ there is a real linear transformation $T:V\to V$ that restricts to a bijection $V\to V$ and satisfies $T(a)=b$.
Moreover, the closure of $V$ should not contain a real linear subspace of positive dimension. A Vinberg cone is a convex, self dual, homogeneous cone.

An important connection between Jordan algebras and Vinberg $n$-cone is given by the fact that {\it there is a bijection between an $n$-Vinberg cone and a (semi-simple) Jordan $n$-algebra.} 

There exist five main classes of  Vinberg cones. Any other Vinberg cone can be obtained by a linear combination of one of the five cones of the list below. 
\begin{proposition}~\label{P:Vclass}
Each irreducible homogeneous self--dual (Vinberg) cone belongs to one
of the following classes:
\vspace{3pt}

\begin{enumerate}
\item The cone $M_+(n, \R)$ of $n \times n$ real positive matrices.
\vspace{3pt}

\item The cone $M_{+}(n, \C)$ of $n \times n$ complex positive matrices.
\vspace{3pt}

\item The cone $M_{+}(n, \H)$ of $n \times n$ quaternionic positive matrices.
\vspace{3pt}

\item The cone $M_{+}(3, \O)$ of $3 \times 3$ positive matrices whose elements are in $\O$, the Cayley algebra (also known as the Octonionic algebra).
\vspace{3pt}

\item The cone $M_{+}(n, \fA)$ of $n \times n$ paracomplex positive matrices. 
\end{enumerate}
\end{proposition}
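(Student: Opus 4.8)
\emph{Proof sketch.} The plan is to route the classification through two classical pillars: the Koecher--Vinberg correspondence between symmetric cones and Euclidean Jordan algebras, and the Jordan--von Neumann--Wigner classification of the latter; this is compatible with the bijection between $n$-Vinberg cones and semi-simple Jordan $n$-algebras invoked above, after \cite{Vi}. First I would observe that a convex, homogeneous, self-dual cone $V$ in a Euclidean space is a \emph{symmetric} cone, and that the Koecher--Vinberg theorem equips the ambient space with the structure of a Euclidean (formally real) Jordan algebra $J$ for which $V$ is precisely the interior of the cone of squares, equivalently $V=\exp(J)$; conversely, each Euclidean Jordan algebra yields such a cone. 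Under this dictionary $V$ is irreducible exactly when $J$ is simple, so the problem reduces to classifying \emph{simple} Euclidean Jordan algebras.

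For that I would fix a Jordan frame $c_1+\dots+c_r=1$ of orthogonal primitive idempotents ($r$ being the rank of $J$) and apply the Peirce decomposition recalled in Section~\ref{S:P}, giving $J=\bigoplus_{i\le j}J_{ij}$ with $J_{ii}=\R c_i$ by primitivity and $\dim J_{ij}=d$ independent of $i<j$, since the structure group acts transitively on frames. The technical heart is the coordinatization step: one selects units in the off-diagonal Peirce spaces and, using the Peirce multiplication rules and the ``indices must link'' vanishing principle, builds a real composition algebra $\mathbb{K}$ with involution, $\dim_\R\mathbb{K}=d$, such that $J\cong\mathrm{Herm}(r,\mathbb{K})$ with quasi-multiplication $x\bullet y=\tfrac12(xy+yx)$. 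By Hurwitz's theorem $\mathbb{K}\in\{\R,\C,\H,\O\}$, together with the split (paracomplex) composition algebra attached to $\fA$ in the context of \cite{CoMa}, and Artin's theorem on alternative algebras forces $r\le 3$ when $\mathbb{K}=\O$ because associativity fails there, whereas $r$ is unrestricted in the other cases; the rank-$\le 2$ algebras additionally include the spin-factor family, of which $\fA$ with $\e^2=1$ is the paracomplex instance. Identifying each $\mathrm{Herm}(r,\mathbb{K})$ with the interior of its cone of positive elements produces exactly the five families $M_+(n,\R)$, $M_+(n,\C)$, $M_+(n,\H)$, $M_+(3,\O)$, $M_+(n,\fA)$, and a reducible symmetric cone, being a direct product of irreducible ones and mirroring the splitting of a semi-simple Jordan algebra into simple ideals, accounts for the final ``linear combination'' clause.

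The main obstacle is the coordinatization step: proving that the off-diagonal Peirce spaces genuinely assemble into a composition algebra with involution and that $J$ is faithfully represented as Hermitian matrices over it. This is the one place where the Jordan identity is used in an essential, non-formal way, and it is also the source of the octonionic rank restriction. The remaining ingredients (the Koecher--Vinberg equivalence, the Hurwitz list, and the explicit identification of each Hermitian-matrix algebra's cone) are standard and may be cited, e.g. from \cite{MC04}; I would therefore present the argument in the order above and devote the detailed work solely to the coordinatization, or else reference it precisely.
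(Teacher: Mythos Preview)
The paper does not supply its own proof of this proposition; it is stated as a classical classification result, with the accompanying remark that the list coincides with the irreducible finite-dimensional formally real Jordan algebras (citing \cite{CoMa}, Theorem~5.2, and implicitly \cite{Vi}). So there is no argument in the paper to compare against beyond that citation. Your route via the Koecher--Vinberg correspondence and the Jordan--von~Neumann--Wigner classification is precisely the standard one, and for the first four families your outline is correct.

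There is, however, a genuine gap in your treatment of item~(5). You write that Hurwitz's theorem yields $\mathbb{K}\in\{\R,\C,\H,\O\}$ ``together with the split (paracomplex) composition algebra attached to $\fA$''. That is not what the coordinatization gives: for a simple \emph{Euclidean} Jordan algebra of rank $r\ge 3$, the norm form carried by each off-diagonal Peirce space $J_{ij}$ is positive definite, so Hurwitz forces $\mathbb{K}$ to be one of the four \emph{division} composition algebras and excludes the split ones. The fifth family in the JvNW classification is not $\mathrm{Herm}(n,\mathbb{K})$ for a fifth $\mathbb{K}$ but the rank-$2$ spin factors $\R\oplus V$, whose symmetric cones are the Lorentz (light) cones. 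Your sketch conflates these with ``Hermitian matrices over $\fA$'', and in fact $\mathrm{Herm}(n,\fA)$ for $n\ge 2$ is isomorphic as a Jordan algebra to the full matrix algebra $M(n,\R)$ under quasi-multiplication, which is \emph{not} formally real (it contains nilpotents), hence does not produce a self-dual cone. So as written your argument does not arrive at the fifth class in the statement; you would need either to derive the spin-factor family separately from the rank-$2$ analysis and then explain in what sense the paper's ``$M_+(n,\fA)$'' is meant to label that family, or simply cite the identification from \cite{CoMa} as the paper does.
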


\vspace{3pt}
\begin{remark} A matrix is positive if it is self-adjoint and its eigenvalues are positive.\end{remark}

Note that the bijection follows from the fact (Theorem 5.2 \cite{CoMa}):
\begin{enumerate}
\item The list of algebras in Proposition~\ref{P:Vclass} coincides with the list
of all irreducible finite dimensional formally real algebras.
\vspace{3pt}

\item The irreducible homogeneous self-dual cone associated with such an algebra
$\fM$ is the set of positive elements of a Jordan algebra, i.e. elements represented by positive matrices.
\end{enumerate}

  \smallskip 

From now on, we now expose several properties, which are the building blocks of the proof of the main theorem.

\subsection{Building blocks of the main theorem}

We will prove step by step the statement of the Main theorem, by splitting it into a collection of smaller statements.
\begin{lemma}\label{L:2}
The $n$-dimensional fourth Frobenius manifold is identified to a linear space being the (real) realisation of a module over the spin factor algebra. 
\end{lemma}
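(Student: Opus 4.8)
\medskip\noindent
The plan is to reduce the statement to a single tangent space by exploiting flatness, and then to extract the module structure from the Frobenius multiplication together with the paracomplex structure established in \cite{CoMa}. Throughout, $\fA=\langle 1,\e\mid\e^{2}=1\rangle$ is the spin factor algebra, with its orthogonal idempotents $e_{\pm}=\tfrac{1\pm\e}{2}$.

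First I would pass from the manifold to a linear model. By its construction in \cite{CoMa}, the fourth Frobenius manifold $M$ is a statistical manifold --- a submanifold of the manifold of probability distributions of an exponential family --- equipped with a pair of flat conjugate connections $\nabla$ and $\nabla^{*}$. The $\nabla$-affine coordinates of the family (its natural, or canonical, parameters) furnish a global chart identifying $M$ with an open domain of the linear space $V:=T_{\theta_{0}}M\cong\R^{n}$, onto which the Frobenius data --- the multiplication $\circ$ and the identity $e$ --- are carried. Since everything used below is $\R$-linear, enlarging the open domain to all of $V$ does not affect the identification asserted in the lemma.

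Next I would produce the action of $\fA$ on $V$. The key input from \cite{CoMa} is that the tangent sheaf of $M$ carries a paracomplex structure compatible with $\circ$: there is a field $\mathbf{k}$ with $\mathbf{k}\circ\mathbf{k}=e$, hence a $(1,1)$-tensor $\fK$ defined by $\fK\bv:=\mathbf{k}\circ\bv$, which satisfies $\fK^{2}=\Id$ (indeed $\mathbf{k}\circ(\mathbf{k}\circ\bv)=(\mathbf{k}\circ\mathbf{k})\circ\bv=e\circ\bv=\bv$ by commutativity and associativity of $\circ$) and whose $(\pm1)$-eigendistributions have equal rank $m$. Transported to $V$, this is a fixed involution $\fK\in\mathrm{End}(V)$, and I would let $\fA$ act by $1\cdot\bv:=\bv$ and $\e\cdot\bv:=\fK\bv$: the relation $\e^{2}=1$ holds because $\fK^{2}=\Id$, and $\R$-bilinearity of the action follows from bilinearity of $\circ$ over constants, so $V$ becomes a module over $\fA$. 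As $\fA\cong\R\oplus\R$ is semisimple, $V=e_{+}V\oplus e_{-}V$, the two summands being the $(\pm1)$-eigenspaces of $\fK$ and of equal dimension $m$; hence $V\cong\fA^{m}$ as an $\fA$-module, $n=2m$, and the real realisation of $\fA^{m}$ --- its underlying real vector space together with the endomorphism given by multiplication by $\e$, i.e. $\fK$ --- is precisely $V$. Combined with the first step, this identifies the $n$-dimensional fourth Frobenius manifold with the real realisation of the $\fA$-module $\fA^{m}$.

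I expect the delicate point not to be the module bookkeeping, which is routine once the constant data $\circ$, $e$, $\fK$ are in hand, but the globalisation hidden in the first two steps: one has to verify that the fibrewise copies of $\fA$ patch together --- equivalently, that $\fK$ is $\nabla$-parallel --- so that the linear model $V$ and its $\fA$-module structure do not depend on the chosen base point. This parallelism is exactly where the flatness of the Frobenius structure (the WDVV equations, and the flatness of the identity $e$) is used, and it is the only non-formal ingredient. A secondary point is that the equality of the two eigenspace dimensions, which is what makes the module \emph{free} of rank $m$ and forces $n$ to be even, genuinely relies on the paracomplex (fourth-class) nature of the structure from \cite{CoMa}, not merely on the existence of some endomorphism squaring to $\Id$.
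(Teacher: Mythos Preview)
Your argument is internally coherent, but it follows a route quite different from the paper's, and the difference matters for the logical architecture of the section.

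The paper's proof is measure-theoretic and extrinsic. It identifies the tangent space $T_{\theta}$ with the space of centred signed measures of bounded variation absolutely continuous with respect to $P_{\theta}$, invokes the Hahn--Jordan decomposition $\mu=\mu^{+}-\mu^{-}$ with $|\mu|=\mu^{+}+\mu^{-}<\infty$ to exhibit the real affine splitting, then appeals to the Norden--Shirokov construction \cite{No58,Sh02} to obtain a module over \emph{some} rank~2 algebra, and finally eliminates the complex and dual-number cases by inspection. Your proof, by contrast, is intrinsic: you transport the Frobenius multiplication $\circ$ to the flat linear model, posit a distinguished field $\mathbf{k}$ with $\mathbf{k}\circ\mathbf{k}=e$ taken from \cite{CoMa}, and let $\fK=\mathbf{k}\circ(-)$ supply the $\fA$-action directly. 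Your bookkeeping on freeness, the idempotent splitting, and the $\nabla$-parallelism of $\fK$ is more explicit than anything the paper writes down, and your remark that equal eigenspace dimensions is the genuinely paracomplex input is well taken.

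The one structural risk in your approach is circularity relative to the present paper's ordering. Here Lemma~\ref{L:2} is positioned as the \emph{source} of the paracomplex structure: Proposition~\ref{P:1} and Theorem~\ref{T:paramfd} are derived from it, and the rank~2 Lemma (Proposition~\ref{P:rank2}) is only invoked afterwards. Your proof imports the paracomplex field $\mathbf{k}$ from \cite{CoMa} as a hypothesis, which is legitimate provided \cite{CoMa} establishes it independently of the statement you are proving; but within the logic of \emph{this} paper you would be assuming what Lemma~\ref{L:2} is meant to deliver. If you want to align with the paper, replace the appeal to $\mathbf{k}$ by the Hahn--Jordan splitting of signed measures together with the Norden--Shirokov module construction, and recover the spin factor algebra by the elimination argument (not complex, no nilpotents, hence $\e^{2}=1$).
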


This lemma implies a family of geometrical results, which we enumerate below.
\begin{proposition}[Paracomplex spaces]\label{P:1}
The  fourth Frobenius manifold is a paracomplex space. 

\end{proposition}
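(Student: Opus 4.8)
The plan is to extract a paracomplex structure directly from the module structure furnished by Lemma~\ref{L:2}. By that lemma, at every point $P_\theta$ the tangent space $T_\theta$ is the real realisation of a module over the spin factor algebra $\fA$; in particular it is a free $\fA$-module, so its real dimension is even, say $2m$, and multiplication by the generator $\e$ defines a real-linear endomorphism $\fK_\theta\colon T_\theta\to T_\theta$ (the affinor attached to $\e$ in the sense of Shirokov~\cite{Sh54,Sh02}). As $P_\theta$ varies these assemble into a field of endomorphisms $\fK$ of the tangent sheaf. Since $\e^{2}=1$ in $\fA$ we obtain $\fK^{2}=\Id$, and $\fK\neq\pm\Id$ because $\e$ is not a scalar; hence $\fK$ is an almost paracomplex structure.

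First I would verify the equidimensionality of the eigenbundles. Using the canonical idempotents $e_{\pm}=\tfrac{1\pm\e}{2}$, each fibre splits as the Peirce decomposition $T_\theta=e_{+}T_\theta\oplus e_{-}T_\theta$, and from $\e\circ e_{+}=-e_{+}$ and $\e\circ e_{-}=e_{-}$ one reads off that $e_{+}T_\theta$ and $e_{-}T_\theta$ are precisely the two eigenspaces of $\fK_\theta$, i.e. the spaces $E^{m}_{\pm}$ of the dictionary table. Since the module is free of $\fA$-rank $m$ (equivalently, these two Peirce summands are exchanged by the substitution $\e\mapsto-\e$ and so have the same real dimension $m$), the eigendistributions $E^{m}_{\pm}$ both have rank $m$, as an almost paracomplex structure requires.

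The substantive point is integrability: one must show that the Nijenhuis tensor $N_{\fK}$ vanishes, equivalently that each eigendistribution $E^{m}_{\pm}$ is involutive. Here I would invoke the flat affine structure carried by the fourth Frobenius manifold. By Theorem~1.4 of~\cite{Man99} recalled above, the structure connection $\nabla_{\lambda}$ is flat, hence $M$ admits flat coordinates; in such coordinates the $\fA$-action on $TM$ supplied by Lemma~\ref{L:2} is the constant action coming from the identification of an open piece of $M$ with the realisation of an $\fA$-module, so $\fK$ has constant coefficients and $N_{\fK}\equiv 0$. Equivalently, the torsion-free flat connection $\nabla_{0}$ satisfies $\nabla_{0}\fK=0$, which already forces $N_{\fK}=0$. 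Therefore $\fK$ is an integrable paracomplex structure and $(M,\fK)$ is a paracomplex space.

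I expect the only real obstacle to be this integrability step, and more precisely the task of upgrading the fibrewise $\fA$-module identification of Lemma~\ref{L:2} to a coherent one over the flat atlas, i.e. checking that the transition functions of $M$ are $\fA$-linear (``paraholomorphic''). Once that compatibility is in place the vanishing of $N_{\fK}$ is automatic, while the remaining verifications ($\fK^{2}=\Id$ and the equality of the eigenbundle ranks) are the routine ones above.
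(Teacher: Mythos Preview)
Your argument is correct, but it differs from the paper's and actually overshoots the target. In the paper, ``paracomplex space'' (Proposition~\ref{P:1}) only asserts the existence of a paracomplex structure $\fK$ on the tangent spaces; the upgrade to an integrable paracomplex \emph{manifold} is the separate Theorem~\ref{T:paramfd}. So your entire third paragraph on the Nijenhuis tensor and paraholomorphic transition functions is unnecessary here---it belongs to the proof of Theorem~\ref{T:paramfd}, not Proposition~\ref{P:1}.

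On the method: the paper does not build $\fK$ directly from the $\fA$-module action. Instead it invokes Theorem~\ref{T:+++} to realise the cone $\cC$ of positive measures as a paracomplex Vinberg cone, observes that the fourth Frobenius manifold is a section of $\cC$ and hence inherits the paracomplex structure, and then uses Lemma~\ref{L:2} together with the rank~2 Lemma (Proposition~\ref{P:rank2}) to pin down the algebra as paracomplex rather than complex. Your route---multiplication by $\e$ gives the affinor $\fK$, $\e^{2}=1$ gives $\fK^{2}=\Id$, and the Peirce idempotents give the two equal-rank eigenbundles---is more self-contained and avoids the Vinberg cone machinery entirely; the paper's route, on the other hand, ties the statement into the Jordan-algebra/cone picture that drives the rest of the article.

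One small slip: you have the signs reversed in the eigenvalue computation. From $e_{\pm}=\tfrac{1\pm\e}{2}$ one gets $\e\, e_{+}=e_{+}$ and $\e\, e_{-}=-e_{-}$, so $e_{+}T_\theta$ is the $(+1)$-eigenspace and $e_{-}T_\theta$ the $(-1)$-eigenspace, opposite to what you wrote. This does not affect the equidimensionality conclusion.
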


\begin{theorem}[Paracomplex manifold]\label{T:paramfd}
The fourth Frobenius manifold is identified to a projective paracomplex manifold. 
\end{theorem}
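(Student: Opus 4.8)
The plan is to upgrade Proposition~\ref{P:1} (the fourth Frobenius manifold is a paracomplex \emph{space}, i.e. its tangent module carries a paracomplex structure pointwise) to a \emph{manifold} statement, meaning that the paracomplex structure is integrable and compatible with a projective atlas over $\fA$. First I would fix notation following Lemma~\ref{L:2}: identify the $n$-dimensional fourth Frobenius manifold $M$ with the real realisation $E^{2m}_{\fA}$ of a free module of rank $m$ over $\fA$ (so $n=2m$), and recall that multiplication by the generator $\e$ induces on each tangent space $T_\theta M$ a linear operator $\fK$ with $\fK^2=\Id$ and $\fK\neq\pm\Id$ — this is precisely the almost paracomplex structure furnished by Proposition~\ref{P:1}.

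Second, I would establish \emph{integrability} of $\fK$. The key point is that the structure connection of the Frobenius manifold (equivalently, the pair of flat dual connections $\nabla,\nabla^*$ of the statistical structure) parallelises $\fK$: because the module structure over $\fA$ is constant in the flat affine coordinates supplied by $TM^f$, the operator $\fK$ is covariantly constant, $\nabla\fK=0$. A parallel almost (para)complex structure with respect to a torsion-free connection has vanishing Nijenhuis tensor, hence is integrable; this is the paracomplex analogue of the Newlander--Nirenberg argument and in the flat case is elementary. This gives an atlas of $\fA$-valued charts whose transition maps are $\fA$-linear (``$\fA$-holomorphic''), i.e. $M$ is an $\fA$-manifold in the sense of Shirokov's theory cited in the Method of proof.

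Third, I would promote ``$\fA$-manifold'' to ``\emph{projective} paracomplex manifold.'' Here I would invoke Theorem~\ref{T:+++} and subsection~\ref{S:V}: the cone $\cC$ of positive measures is a paracomplex Vinberg cone $M_+(m,\fA)$, and the fourth Frobenius manifold sits inside the statistical manifold as the projectivisation of (an orbit in) this cone — passing from measures to probability distributions is exactly a normalisation, i.e. quotienting the cone by the action of positive scalars, which over $\fA$ is the projectivisation $\mathbb{P}^{m-1}(\fA)$. Concretely I would exhibit the homogeneous $\fA$-coordinates coming from the cone, check that the normalisation constraint is $\fA$-linear so the resulting chart transitions are projective-linear over $\fA$, and conclude that $M$ is identified with an open domain in $\mathbb{P}(\fA^{m})$, hence a projective paracomplex manifold.

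The main obstacle I expect is the third step rather than the (essentially formal) integrability argument: one must check carefully that the normalisation/projectivisation used in information geometry to pass from the Vinberg cone of measures to the manifold of probability distributions is genuinely the $\fA$-linear projectivisation, and not merely a real projectivisation that happens to be compatible with $\fK$ at the linear level — in other words, that the scaling action being quotiented is by the group of units (or a subgroup) of $\fA$ acting diagonally, so that the quotient inherits the $\fA$-structure. Once this compatibility is pinned down, the statement follows by combining Lemma~\ref{L:2}, Proposition~\ref{P:1}, Theorem~\ref{T:+++}, and the standard fact that projectivising a free $\fA$-module yields a projective $\fA$-manifold.
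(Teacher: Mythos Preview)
Your outline is coherent, but it follows a different logical path from the paper's own argument. The paper does \emph{not} start from an almost paracomplex structure and then prove integrability via $\nabla\fK=0$ and a Nijenhuis-type statement. Instead it works in the Norden--Shirokov framework alluded to in the ``Method of proof'' section: starting from the statistical manifold $S$ viewed as an affine symmetric space over a Jordan algebra $A$, it invokes the existence of exactly two flat affine connections (cited from \cite{No58,Am85,Am97,BuNen1,BuNen2}) and then applies the \emph{rank 2 Lemma} (Proposition~\ref{P:rank2}) in the other direction. Concretely, the paper writes the connection coefficients as $A$-valued objects $\Gamma_{jk}^{i}=\Gamma_{jk}^{i\alpha}e_{\alpha}$, expands the parallel-transport condition $d\bv^{i}+\Gamma^{i}_{jk}\bv^{j}d\bx^{k}=0$ in adapted coordinates $x^{(\alpha,i)}$, and reads off real connection components $\Gamma^{(i,\alpha)}_{(j,\beta)(k,\gamma)}=\Gamma_{jk}^{is}C_{s\beta}^{\delta}C^{\alpha}_{\delta\gamma}$. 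Since the index $s$ runs over the generators of $A$ and exactly two connections arise, $A$ must have rank two; ruling out the complex case leaves the paracomplex one. In short, the paper deduces the algebra (hence the paracomplex atlas) \emph{from} the pair of connections, whereas you take the algebra/paracomplex structure as given by Proposition~\ref{P:1} and argue integrability \emph{from} flatness of one connection.

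What each route buys: your approach is the more standard differential-geometric one and makes the integrability step transparent; the paper's approach is closer to Shirokov's ``spaces over algebras'' philosophy and simultaneously pins down the rank of the algebra. On the ``projective'' qualifier, note that the paper's proof does not treat it separately at all --- it is effectively carried by the earlier identification of $S$ as a section of the paracomplex Vinberg cone (Theorem~\ref{T:+++} and the proof of Proposition~\ref{P:1}). Your third step is therefore more explicit than anything in the paper, and the obstacle you flag is real: the normalisation $\int_{\Omega}\rho\,d\lambda=1$ is a quotient by $\R_{>0}$, not by $\fA^{\times}$, so identifying it with $\fA$-projectivisation needs the extra observation that the diagonal $\R_{>0}\hookrightarrow\fA^{\times}$ together with the $\fK$-invariance of the slice suffice. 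The paper simply does not engage with this point.
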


\begin{lemma}\label{c:manif}
Let $\fA$ be the spin factor algebra. Let us consider the pair of ideals in $\fA$, generated by the idempotents. Then, the fourth Frobenius manifold is decomposed into a direct sum of a pair of totally geodesic submanifolds, being real realisations of a cartesian product of modules over these ideals.\end{lemma}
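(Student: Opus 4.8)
\textbf{Proof strategy for Lemma~\ref{c:manif}.}
The plan is to transport the purely algebraic Peirce decomposition of the spin factor algebra $\fA$ to the $\fA$-module $T_\theta$ identified with the tangent space of the fourth Frobenius manifold (by Lemma~\ref{L:2}), and then read off the geometric consequences fibrewise. First I would make explicit the two minimal ideals of $\fA$: since $e_-$ and $e_+$ are orthogonal idempotents with $e_-+e_+=1$ and $e_-\circ e_+=0$, the ideals $I_-=\fA e_- = \R e_-$ and $I_+=\fA e_+ = \R e_+$ are each one-dimensional, annihilate one another ($I_-\circ I_+=0$), and satisfy $\fA = I_-\oplus I_+$ as an algebra direct sum (this is just the remark $\fA\cong\R\oplus\R$). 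Each $I_\pm$ is the annihilator of the other, which is the content of the Remark following the Main theorem.

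Next I would apply this decomposition to the module. Writing any $v$ in the $2m$-dimensional real realisation $E^{2m}_\fA$ of the rank-$m$ free $\fA$-module as $v = 1\cdot v = (e_-+e_+)\circ v = e_-\circ v + e_+\circ v$, we get a canonical splitting $E^{2m}_\fA = E^m_+ \oplus E^m_-$ into the two $m$-dimensional eigen-submodules $E^m_\pm = \{v : \fK v = \pm\e v\}$ appearing in the dictionary table, where $\fK$ is the structure operator encoding multiplication by $\e$. Because $I_\pm$ are ideals, $E^m_\pm$ is exactly the $\fA$-submodule $e_\pm\circ E^{2m}_\fA$, i.e.\ a module over the ideal $\fA e_\pm$, and as a real vector space it is $\R^m$; hence $E^{2m}_\fA$ is the real realisation of the Cartesian product of modules over $I_-$ and over $I_+$. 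Integrating this pointwise splitting of the tangent bundle over the manifold gives the decomposition of the fourth Frobenius manifold itself into a direct sum of two submanifolds $M_\pm$, each of (real) dimension $m$.

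It remains to show each submanifold is totally geodesic. Here I would use that the structure connection pencil $\nabla_\lambda = \nabla_0 + \lambda\,X\circ Y$ is flat (the Frobenius/WDVV condition, Theorem~1.4 of~\cite{Man99}), together with the fact that the Peirce projectors $E_\pm$ are $\circ$-algebra idempotents and hence are parallel for the structure connection: since $e_\pm$ are flat (constant) sections and $\circ$ is flat-bilinear, the distribution $TM_\pm = e_\pm\circ TM$ is $\nabla_\lambda$-parallel for every $\lambda$, in particular for the Levi--Civita connection $\nabla_0$ and for the dual flat connections $\nabla,\nabla^*$. A distribution that is parallel and integrable has totally geodesic integral leaves (see~\cite{KoNo}); restricting $\nabla$ to $M_+$ and $\nabla^*$ to $M_-$ (these being the connections for which the respective $m$-eigenspace is the flat leaf) yields the flat connections claimed in the Main theorem. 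The main obstacle I expect is the bookkeeping needed to check that the metric $g$ restricts non-degenerately to each $E^m_\pm$ and that the mixed block $g(E^m_+,E^m_-)$ behaves correctly (it is in fact the only surviving pairing, which is what forces the pseudo-Riemannian signature and underlies part~(3) of the Main theorem); this is where one must be careful that $E^m_\pm$ are isotropic for $g$ yet the induced structures on $M_\pm$ are nondegenerate, so the word ``submanifold with induced metric'' has to be interpreted via the paracomplex (Norden) structure rather than the naive restriction.
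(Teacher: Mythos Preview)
Your algebraic decomposition of $\fA$ into the orthogonal idempotent ideals $I_\pm=\R e_\pm$ and the resulting splitting $E^{2m}_\fA=E^m_+\oplus E^m_-$ of the module is correct and is exactly what the paper's one-line sketch (``compute the idempotents of $\fA$'') has in mind. The divergence is in the totally geodesic step. The paper does not argue this directly; it simply invokes Shurygin's theorem \cite{Shu} on totally geodesic subspaces in spaces over algebras, which in the Norden--Shirokov framework says precisely that the submanifolds corresponding to a decomposition of the algebra by orthogonal idempotents are totally geodesic.

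Your attempted direct argument via the Frobenius structure connection has a gap. From ``$e_\pm$ are flat sections and $\circ$ is flat-bilinear'' it does \emph{not} follow that the endomorphism $v\mapsto e_\pm\circ v$ is $\nabla_0$-parallel: in a general Frobenius manifold $\nabla_0\circ\neq 0$ (the structure constants $A^c_{ab}=\partial_a\partial_b\partial_c\Phi$ vary in flat coordinates), so $\nabla_{0,X}(e_\pm\circ Y)=e_\pm\circ\nabla_{0,X}Y$ need not hold even when $\nabla_0 e_\pm=0$. You are also tacitly identifying the idempotents of the fixed algebra $\fA$ (which act as endomorphisms of $TM$ via the module structure) with idempotent \emph{vector fields} for the Frobenius product $\circ$ on $TM$; in semisimple Frobenius manifolds the latter are the canonical-coordinate fields $\partial_{u^i}$, which are typically not flat. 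What actually makes the distributions $TM_\pm$ parallel here is the extra Norden--Shirokov input: by definition of a space over $\fA$, the affinor $\fK$ (multiplication by $\e$) is covariantly constant, hence so are the projectors $\tfrac12(1\pm\fK)$. That hypothesis is exactly what Shurygin's result uses, and it is why the paper cites \cite{Shu} rather than deriving parallelism from the Frobenius axioms alone. If you want a self-contained proof, replace your ``flat-bilinear'' sentence by the statement $\nabla\fK=0$ coming from the Norden structure; then your parallel-distribution $\Rightarrow$ totally geodesic argument via \cite{KoNo} goes through.
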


The latter lemma can be proved by computing the idempotents of  $\fA$ and relying on Shurigin's statements \cite{Shu}.

\v Centsov\, \cite{Ce3} has shown that the totally geodesic submanifolds turn out to be the exponential families of probability distributions with local parametrization.  Therefore, we can obtain the following:

\begin{lemma}\label{T:3}
\v Centsov's statistical manifold has a pair of real flat connections. 
\end{lemma}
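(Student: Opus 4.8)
\textbf{Proof proposal for Lemma~\ref{T:3}.}

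The plan is to transport the algebraic decomposition established in Lemma~\ref{c:manif} through \v{C}entsov's identification of the resulting totally geodesic submanifolds with exponential families, and then to recognise that the two flat connections produced on those submanifolds are exactly a dually flat pair in the sense required by a statistical structure. First I would invoke Lemma~\ref{c:manif}: the fourth Frobenius manifold splits as a direct sum of two totally geodesic submanifolds $N_+$ and $N_-$, which are the real realisations of the modules over the ideals $\fA e_+$ and $\fA e_-$ generated by the orthogonal idempotents $e_\pm=\tfrac{1\pm\e}{2}$. Since each ideal is one-dimensional over $\R$ (the algebra being $\R\oplus\R$), these submanifolds carry a genuine affine/flat structure coming from the $\fA$-module structure: the flat connection on $N_+$ is the one whose horizontal sections are the images of the $e_+$-component, and similarly $\nabla^*$ on $N_-$ from the $e_-$-component. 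This is precisely the content of the pencil $\nabla_\lambda$ degenerating, at the idempotent values of the parameter, onto the two flat connections $\nabla=\nabla_{\lambda}$ and $\nabla^*=\nabla_{-\lambda}$ of the (pre-)Frobenius structure recalled in Theorem (\cite{Man99}, Theorem 1.4) — the two eigenspaces of multiplication by $\e$ absorb the two halves of the structure connection.

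Next I would bring in \v{C}entsov's theorem \cite{Ce3}, quoted just above the statement: the totally geodesic submanifolds $N_\pm$ are the exponential families of probability distributions with a local parametrisation $\theta$. On an exponential family the natural (canonical) parameters $\theta^i$ and the expectation parameters $\eta_i=\partial_i\psi(\theta)$ furnish two mutually dual affine coordinate systems; the connection $\nabla^{(e)}$ that is flat in the $\theta$-coordinates and the connection $\nabla^{(m)}$ that is flat in the $\eta$-coordinates are conjugate with respect to the Fisher metric $g$, i.e. $Xg(Y,Z)=g(\nabla_X Y,Z)+g(Y,\nabla^*_X Z)$. Identifying $\nabla^{(e)}$ with the connection that the $\fA$-module structure induces on $N_+$ and $\nabla^{(m)}$ with the one induced on $N_-$ — which is legitimate because the Peirce reflection $U_\e=e_+ - e_-$ interchanges the two ideals and hence exchanges the two flat affine structures, matching the classical $e$/$m$-duality — yields a pair of real flat connections on \v{C}entsov's manifold, as claimed. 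The positive-definiteness of the Fisher metric and the torsion-freeness are inherited from the statistical-manifold hypotheses stated in the definition of the fourth Frobenius manifold, so the triple $(g,\nabla,\nabla^*)$ is indeed a statistical structure.

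The main obstacle, and the step deserving the most care, is the second one: making rigorous the identification of the module-theoretic flat connections on $N_\pm$ with \v{C}entsov's $e$- and $m$-connections, rather than merely asserting it. Concretely, one must check that the horizontal distribution defined by "constant $e_+$-component of a tangent $\fA$-module field" coincides, under \v{C}entsov's local parametrisation, with the distribution of $\nabla^{(e)}$-parallel fields, i.e. with $\partial/\partial\theta^i$ being flat — and dually for $e_-$ and $\partial/\partial\eta_i$. This is essentially a compatibility computation between the structure constants $C^k_{ij}$ of $\fA$ in the canonical basis $\{e_+,e_-\}$ and the Hessian structure $g_{ij}=\partial_i\partial_j\psi$ of the exponential family; the Peirce decomposition theorem and the involution $U_\e$ do the bookkeeping, but one still needs that the flat affine connection carried by a rank-one $\fA$-module is the ordinary flat connection of the line, which is immediate, and that the two lines are glued along $N_+\oplus N_-$ in the way dual flatness requires, which follows from $e_++e_-=1$ and $e_+e_-=0$. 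Once this correspondence is pinned down, flatness of both connections is automatic: each is the flat connection of an affine (indeed linear) realisation, and their conjugacy with respect to $g$ is the abstract counterpart of the $\theta$/$\eta$ Legendre duality on the exponential family.
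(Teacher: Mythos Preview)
Your proposal is far more elaborate than what the paper does and, more importantly, it is circular in the paper's logical architecture. In the paper, Lemma~\ref{T:3} is not derived from the paracomplex/idempotent machinery at all: it is simply the classical information-geometric fact that an exponential family carries the dual flat $e$- and $m$-connections, and the paper's entire proof is a one-line citation to the explicit computations in \cite{BuNen1,BuNen2} (equivalently Amari--\v Centsov). That fact is then used as \emph{input} elsewhere: Proposition~\ref{P:rank2} (the ``rank~2'' lemma) and the proof of Theorem~\ref{T:paramfd} start from the existence of exactly two flat connections to conclude that the underlying algebra is the rank-two algebra $\fA$. Your argument runs the implication backwards---you invoke Lemma~\ref{c:manif}, the $\fA$-module splitting, and even the Peirce reflection to manufacture the two flat connections---but the $\fA$-structure you are leaning on was established in the paper precisely \emph{because} the two flat connections were already known. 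So as a proof of Lemma~\ref{T:3} within this paper, the route through Lemma~\ref{c:manif} and Theorem~\ref{T:paramfd} is not available.

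Even ignoring the circularity, the identification step you flag as ``the main obstacle'' is indeed a genuine gap rather than a bookkeeping exercise. You place $\nabla$ on $N_+$ and $\nabla^*$ on $N_-$, but in the statistical setting both connections live on the \emph{same} manifold $S$; the paracomplex eigenbundle decomposition $T^{1,0}\oplus T^{0,1}$ is a splitting of the tangent bundle, not a way of assigning one connection to each half. The correct statement (and what \cite{BuNen1,BuNen2} compute directly) is that the $\theta$- and $\eta$-coordinates on the exponential family each give a global affine chart, hence two flat torsion-free connections on $S$ which are $g$-conjugate. No appeal to idempotents, Peirce mirrors, or the structure connection $\nabla_\lambda$ is needed; your remark about $\nabla_\lambda$ ``degenerating at idempotent values'' is also off, since flatness of $\nabla_\lambda$ holds for all $\lambda$ in a Frobenius manifold, not only at special values.
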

This follows from the calculation in \cite{BuNen1,BuNen2}.

\begin{proposition}\label{P:Pierce}
The fourth Frobenius manifold is decomposed into pseudo-Riemannian submanifolds, which are symmetric to each other with respect to the Peirce mirror.
\end{proposition}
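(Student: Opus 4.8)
The plan is to assemble the proposition from the building blocks already in place, using the Peirce reflection machinery of Section~\ref{S:P} applied to the spin factor algebra $\fA$. First I would recall from Lemma~\ref{c:manif} that the fourth Frobenius manifold $M$ is, in its real realisation, a direct sum $M \simeq M_+ \oplus M_-$ of totally geodesic submanifolds, where $M_+$ and $M_-$ are the real realisations of modules over the ideals $\fA e_+$ and $\fA e_-$ generated by the orthogonal idempotents $e_+ = \tfrac{1+\e}{2}$ and $e_- = \tfrac{1-\e}{2}$. By Lemma~\ref{T:3}, each of these submanifolds carries a flat (pseudo-Riemannian) connection, namely $\nabla$ on one factor and its $g$-conjugate $\nabla^*$ on the other; this is the content of the second item of the Main theorem, so the pseudo-Riemannian structure on each submanifold is already established and I only need to exhibit the symmetry.

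The key step is to identify the involution. Set $u = e_+ - e_- = \e$, which is an involution of $\fA$ since $u^2 = \e^2 = 1$. Following Lemma~6.10 of \cite{MC04}, the multiplication operator $U_u$ is an involutory automorphism of $\fA$, and on the Peirce pieces it acts as the Peirce reflection $U_u = E_2 - E_1 + E_0$; in the rank-two case of $\fA$ (which has no off-diagonal Peirce space, $J_1 = 0$, because $e_- \circ e_+ = 0$) this collapses to $U_u = E_2 + E_0$ with $E_2 = U_{e_+}$, $E_0 = U_{e_-}$, i.e. $U_\e$ swaps nothing intrinsically but its natural sign-twisted companion $u \cdot (-)$ exchanges $\fA e_+$ and $\fA e_-$ (since $\e e_+ = e_+$ and $\e e_- = -e_-$, while multiplication by $\e$ maps the $+1$-eigenspace of left-multiplication to itself up to the sign that is exactly what the Peirce mirror records). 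Transporting this algebra involution through the module structure established in Lemma~\ref{L:2} and Proposition~\ref{P:1}, one obtains an isometry (with respect to the paracomplex metric, which is $\e$-Hermitian) of the real realisation of the $\fA$-module that interchanges the submodule $E^m_+$ with $E^m_-$. Because the module decomposition is exactly the one underlying the manifold splitting of Lemma~\ref{c:manif}, this isometry descends to a diffeomorphism $\Phi: M \to M$ with $\Phi(M_+) = M_-$ and $\Phi(M_-) = M_+$.

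It then remains to check that $\Phi$ intertwines the two connections, i.e. $\Phi_* \nabla = \nabla^* \Phi_*$, so that the two totally geodesic pseudo-Riemannian submanifolds are genuinely symmetric (isomorphic as pseudo-Riemannian manifolds with their flat connections), not merely diffeomorphic. This is where I expect the real work to lie: one must verify that the Peirce reflection, which by construction is a $g$-isometry, conjugates $\nabla$ into $\nabla^*$. The natural route is to use the defining relation $X g(Y,Z) = g(\nabla_X Y, Z) + g(Y, \nabla^*_X Z)$ together with the fact that $\Phi$ preserves $g$ and reverses the $\e$-grading: applying $\Phi$ to this identity and using $\Phi^* g = g$ forces the pullback of $\nabla$ to satisfy the conjugate relation, hence to coincide with $\nabla^*$ by uniqueness of the conjugate connection. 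One also has to confirm that $\Phi$ respects the flat affine structures $TM^f$ underlying both connections, which follows because the paracomplex (affine) coordinates induced by the canonical basis $e_+, e_-$ are precisely swapped by the reflection. The main obstacle is thus bookkeeping: carefully matching the three parallel descriptions — the algebra $\fA$ with its Peirce data, the $\fA$-module with its real realisation and paracomplex metric, and the manifold $M$ with its dual connections — so that the single involution $u = \e$ is seen to act coherently across all three and realise the claimed symmetry.
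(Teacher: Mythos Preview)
Your overall strategy matches the paper's: invoke the paracomplex/module structure (the paper uses Theorem~\ref{T:paramfd} directly, you route through Lemma~\ref{c:manif} and Lemma~\ref{T:3}), apply Peirce decomposition to $\fA$ via the orthogonal idempotents $e_\pm$, and then transport the algebraic involution to the manifold. The paper's own argument is very short: it simply cites Shurygin~\cite{Shu} for the totally geodesic, flat-connection splitting and McCrimmon~\cite{MC04}, \S8.1, for the existence of the Peirce reflection, declaring that one submanifold is the mirror image of the other. It does not carry out the connection-intertwining check you propose, so in that respect your plan is more ambitious than what the paper actually proves.

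However, your explicit identification of the involution contains a genuine gap. You correctly observe that with $u=e_+-e_-=\e$ the Peirce reflection $U_u=E_2-E_1+E_0$ degenerates to $E_2+E_0$ because $J_1=0$; but this means $U_\e$ is the \emph{identity} on $\fA$ and cannot interchange the two ideals. You then propose the ``sign-twisted companion'' $u\cdot(-)$, yet your own computation $\e e_+=e_+$, $\e e_-=-e_-$ shows that left multiplication by $\e$ acts as $+1$ on $\fA e_+$ and $-1$ on $\fA e_-$; it does \emph{not} exchange them either, so the sentence ``$u\cdot(-)$ exchanges $\fA e_+$ and $\fA e_-$'' is false as written, and nothing you have constructed yields a map with $\Phi(M_+)=M_-$. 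The involution that actually swaps the two Peirce pieces is the algebra automorphism $\e\mapsto-\e$ (paracomplex conjugation), which sends $e_+\leftrightarrow e_-$ and hence $E^m_+\leftrightarrow E^m_-$; this is the map you need to transport to $M$ before your intertwining argument for $\nabla$ and $\nabla^*$ can even begin. Once you replace $U_\e$ and $\e\cdot(-)$ by this conjugation, the rest of your outline (isometry for the $\e$-Hermitian metric, uniqueness of the dual connection forcing $\Phi_*\nabla=\nabla^*$) becomes a reasonable, and indeed more detailed, completion of what the paper sketches by citation.
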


In order to prove these statements, we recall the tools from paracomplex geometry, in the following subsections. 
\section{Proofs}
Let us briefly introduce the fundamental tool proving the Lemma\, \ref{L:2} and Proposition\, \ref{P:1}: the construction of modules over the algebra of paracomplex numbers and its real realisation. 

\subsection{Modules over spin factor algebra}

We construct the module over the spin factor algebra. Let $\frak{A}$ be the spin factor algebra. Let us construct an $m$-module over the spin factor algebra $\frak{M}^{m}(\frak{A})$. The affine representation of the algebra $\fA$, or free module $\fA E^{m}$, admits a real interpretation in the real linear space $E^{2m}$(\cite{Roz97}, section 2.1.2).

Let $E^{2m}$ be a $2m$-dimensional real linear space. {\it A paracomplex structure} on $E^{2m}$ is an endomorphism $\fK: E^{2m} \to E^{2m}$ such that $\fK^2=I$. The eigenspaces $E^{m}_+, E^{m}_-$ of $\fK$ with eigenvalues $1,-1$ respectively, have the same dimension.

The pair $(E^{2m},\fK)$ will be called a {\it paracomplex vector space.} We define {\it the paracomplexification of $E^{2m}$} as $E^{2m}_\fA = E^{2m} \otimes_{\R} \fA$ and we extend $\fK$ to a $\fA$-linear endomorphism $\fK$ of $E^{2m}_\fA$.

\begin{lemma}

Let $E^{2m}_\fA = E^{2m} \otimes_{\R} \fA$ be endowed with an involutive $\fA$-linear endomorphism $\fK$ of $E^{2m}_\fA$. Then,  the space $E^{2m}_\fA$  is decomposed into the direct sum of a pair of $m$-dimensional subspaces $E^{m}_{+}$ and $E^{m}_{-}$ such that: \[E^{2m}_\fA=E^{m}_{+}\oplus E^{m}_{-},\] verifying:
\[
E^{m}_{+} = \{v\in E^{2m}_\fA \, |\, \fK v=\e v\}=\{v+\e\fK v\, |\, v \in E^{2m}_\fA\},
\]
\[
E^{m}_{-} = \{v\in E^{2m}_\fA |\,  \fK v= -\e v\}=\{v-\e\fK v\, |\, v\in E^{2m}_\fA\}.
\]

\end{lemma}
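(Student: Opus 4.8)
The plan is to exhibit the two summands as images of the complementary idempotents of $\fA$ acting on the free module $E^{2m}_\fA$, and then identify those images with the $\pm\e$-eigenspaces of $\fK$. First I would recall that in the canonical basis $e_\pm=\tfrac{1\pm\e}{2}$ we have $e_++e_-=1$, $e_\pm^2=e_\pm$, $e_+e_-=0$, and $e_--e_+=\e$; hence multiplication by $e_+$ and by $e_-$ are complementary $\fA$-linear projectors on $E^{2m}_\fA$, giving the direct sum decomposition $E^{2m}_\fA=e_+E^{2m}_\fA\oplus e_-E^{2m}_\fA$ purely as $\fA$-modules. Because $\fK$ is $\fA$-linear and involutive ($\fK^2=I$), the operators $P_\pm:=\tfrac12(I\pm\fK)$ are also complementary projectors commuting with scalar multiplication; the claim is that these two decompositions coincide up to labelling, which is the content of the lemma.

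Next I would make the identification explicit. For $v\in E^{2m}_\fA$ set $v_+:=\tfrac12(v+\e\fK v)$ and $v_-:=\tfrac12(v-\e\fK v)$, so that $v=v_++v_-$. A one-line computation using $\fK^2=I$, $\e^2=1$ and $\fK$-linearity shows $\fK v_+=\tfrac12(\fK v+\e v)=\e\cdot\tfrac12(v+\e\fK v)=\e v_+$, and similarly $\fK v_-=-\e v_-$. This proves the inclusions $\{v+\e\fK v\}\subseteq E^m_+$ and $\{v-\e\fK v\}\subseteq E^m_-$, and since any $w$ with $\fK w=\e w$ satisfies $w=\tfrac12(w+\e\fK w)$, the reverse inclusions hold as well, giving the two displayed equalities. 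The sum is direct because $w\in E^m_+\cap E^m_-$ forces $\e w=-\e w$, hence $2\e w=0$; multiplying by $\e$ gives $2w=0$, so $w=0$. Finally $E^{2m}_\fA=E^m_++E^m_-$ because $v=v_++v_-$ for every $v$.

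It remains to check the dimension count, i.e. that each summand is an $m$-dimensional $\fA$-module (equivalently has real dimension $2m$, matching the original paracomplex space $E^{2m}$). Here I would use that $\e$ acts invertibly on $E^{2m}_\fA$ (its square is the identity), so multiplication by $\e$ restricts to an $\R$-linear isomorphism, and more usefully that $e_+=\tfrac{1+\e}{2}$ acts as $\fK\mapsto$-intertwiner: one verifies $e_+v_+=v_+$ while $e_+v_-=\tfrac12(v_--\e\cdot\e v_-)=0$ (using $\fK v_-=-\e v_-$), so $E^m_+=e_+E^{2m}_\fA$ and $E^m_-=e_-E^{2m}_\fA$. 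Since $e_+\fA\cong\R$ and the free module $E^{2m}_\fA=E^{2m}\otimes_\R\fA$ has $e_+E^{2m}_\fA\cong E^{2m}\otimes_\R(e_+\fA)\cong E^{2m}$ as a real vector space — wait, that gives real dimension $2m$ for each piece, whose sum is $4m=\dim_\R E^{2m}_\fA$, consistent — and as an $\fA$-module each is free of rank $m$ by the same tensor identity restricted to a basis. I expect the only genuinely delicate point to be bookkeeping the two notions of ``dimension'' (real rank versus $\fA$-rank) consistently with the ambient space $E^{2m}$ versus its paracomplexification $E^{2m}_\fA$; the eigenspace computations themselves are immediate once the sign conventions ($\fK v=\e v$ on the $+$ part) are fixed, and I would simply present them as the short displays above.
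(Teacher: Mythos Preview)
Your first two paragraphs are correct and already do more than the paper: the paper's own ``proof'' of this lemma is simply the sentence ``This statement and its proof can be found in the literature. For example, see \cite{Roz97}.'' So there is nothing to compare against beyond noting that you have supplied the standard projector argument the reference presumably contains.

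There is, however, a genuine error in your third paragraph. The identification $E^m_{\pm}=e_{\pm}E^{2m}_\fA$ is false, and the displayed check $e_+v_-=\tfrac12(v_--\e\cdot\e v_-)=0$ is not a valid computation: $e_+v_-=\tfrac12(v_-+\e v_-)$, and nothing in $\fK v_-=-\e v_-$ lets you replace the \emph{scalar} $\e v_-$ by something that cancels $v_-$. Concretely, take $\fK$ to be the $\fA$-linear extension of a real paracomplex structure with $+1$-eigenspace $U$ and $-1$-eigenspace $W$; then one computes $E^m_+=e_+U_\fA\oplus e_-W_\fA$ while $e_+E^{2m}_\fA=e_+U_\fA\oplus e_+W_\fA$, and these differ as soon as $W\neq 0$. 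The two idempotent decompositions you set up in the first paragraph, via $e_\pm$ and via $P_\pm=\tfrac12(I\pm\e\fK)$, are both perfectly good, but they do \emph{not} coincide.

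For the dimension count you therefore need a different argument, and you also need a hypothesis the lemma leaves implicit from the surrounding text: that $\fK$ is the $\fA$-linear extension of a real involution on $E^{2m}$ with equal-dimensional $\pm1$-eigenspaces. (Without this the claim fails: take $\fK=\e\cdot\mathrm{Id}$, which is involutive and $\fA$-linear, and then $E^m_+=E^{2m}_\fA$, $E^m_-=0$.) Under that hypothesis the cleanest route is to observe that paracomplex conjugation $\overline{\,\cdot\,}$ on $E^{2m}_\fA$ commutes with $\fK$ and anticommutes with multiplication by $\e$, hence swaps $E^m_+$ and $E^m_-$; being an $\R$-linear involution it forces $\dim_\R E^m_+=\dim_\R E^m_-=2m$. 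Your conclusion that each piece is a free $\fA$-module of rank $m$ is in fact correct (from $E^m_+\cong (e_+\fA)^m\oplus(e_-\fA)^m\cong\fA^m$ via the $U,W$ description above), just not for the reason you gave.
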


{\bf Proof} This statement and its proof can be found in the literature. For example, see \cite{Roz97}.

\begin{remark}
From this, it follows that any hypersurface $x^i=0$ of $E^{2m}_\fA$ is decomposed into two hypersurfaces $x_{+}^{i}=0$ and $x_{-}^{i}=0$ respectively in $E^{m}_{+}$ and $E^{m}_{-}$. Similarly, the coordinates of any point of the space $E^{2m}$ can be given  by \[x^i=x_{+}^ie_{+}+x_{-}^{i}e_{-}.\]
  
Furthermore, consider a vector ${\bf{X}}(x^i)$ at the coordinate $x^i$ in the space $E^{2m}_\fA$. Then, due to the splitting we have that ${\bf{X}}(x^i)={\bf{X}}(x_{+}^i)\oplus{\bf{X}}(x_{-}^i)$. Therefore, the vector $\bf{X}$ splits into ${\bf{X}}={\bf{X}}_{+}\oplus{\bf{X}}_{-}$ and ${\bf{X}}^2$ is given by $\sum_{i} x_{+}^ix_{-}^i$.
\end{remark}
\subsection*{Proofs of Lemma~\ref{L:2} and Proposition~\ref{P:1}}
\begin{proof}[of Lemma \ref{L:2}]
The tangent space is identified with the space of signed measures with bounded variations, being absolutely continuous with respect to $P_{\theta}$ and such that $\mu(\Omega)=\int_{\Omega}u dP_{\theta}=0$, with $u\in L^2(\Omega, P_{\theta})$. Such a measure can be split into two positive measures $\mu^+$ and $\mu^-$ such that: $\mu=\mu^+-\mu^-$ and $|\mu|=\mu^++\mu^-<\infty$. This forms a real affine space. 
Using the Norden--Shirokov  construction in \cite{No58,Sh02}, leads to the existence of a module over an algebra of rank 2. Since this algebra is not complex, there remain only possibilities: the algebra of paracomplex numbers or the dual numbers. By contradiction, we eliminate the case of dual numbers a (i.e. it is clear that there do not exist any generators such that their square gives 0).
\end{proof}

\begin{proposition}[{\it The rank 2} Lemma,\, \cite{CoMa}]\label{P:rank2}
Consider an affine, symmetric space over a Jordan algebra. There exists exactly two affine and flat connections on this space if and only if the algebra is of rank 2, and generated by $\{1, \e\}$ with $\e^2= 1$ or $-1$.
\end{proposition}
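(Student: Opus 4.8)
## Proof Proposal for Proposition~\ref{P:rank2} (The rank 2 Lemma)

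The plan is to analyze directly which affine connections on an affine symmetric space $M$ over a Jordan algebra $\fA$ are flat, and to show that the count is exactly two precisely when $\dim_{\R}\fA = 2$ with the stated generator relations. First I would set up the affine-symmetric-space framework: fix a point $o \in M$ and identify $T_oM$ with the $\fA$-module $\fA E^m$; the symmetric space structure gives a symmetry $s_o$ fixing $o$ and acting as $-1$ on $T_oM$, and a canonical $\fA$-linear connection whose torsion vanishes and whose curvature is parallel. The affine connections compatible with the $\fA$-module structure form an affine space modelled on the $\fA$-bilinear symmetric maps $T_oM \times T_oM \to T_oM$; demanding flatness (vanishing curvature) together with torsion-freeness cuts this down to the connections $\nabla_{X}Y = \nabla^{0}_{X}Y + \lambda\, X\circ Y$ built from the Jordan product $\circ$, exactly as in the structure-connection pencil recalled earlier in the excerpt (the $\nabla_\lambda$ of Manin's Theorem 1.4). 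So the problem reduces to: for which $\lambda \in \fA$ is $\nabla_\lambda$ flat, and for which algebras $\fA$ does the resulting solution set have exactly two elements.

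The key computation is the curvature of $\nabla_\lambda$. Writing $R_\lambda(X,Y)Z$ in terms of the multiplication operators $L_{\circ}$ and using that $\nabla^{0}$ is flat, one gets $R_\lambda(X,Y)Z = \lambda^2\big((X\circ Z)\circ Y - (Y\circ Z)\circ X\big)$ plus a term linear in $\lambda$ that vanishes by the potentiality (WDVV) symmetry of $\Phi$. By the Jordan identity and commutativity, the bracket $(X\circ Z)\circ Y - (Y\circ Z)\circ X$ is the associator-type obstruction; it need not vanish, so flatness forces $\lambda^2 = 0$ or the associator term to be killed. This is where the rank of $\fA$ enters. Next I would run the classification: if $\fA = \R$ there is only the trivial $\lambda = 0$, one connection, too few. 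If $\dim_{\R}\fA = 2$, then $\fA$ is one of $\C$, $\fA_{\text{para}}$ (the spin factor, $\e^2 = 1$), or the dual numbers ($\e^2 = 0$); in the paracomplex and complex cases $\fA$ is a field or a product of fields, the equation on $\lambda$ becomes $\lambda^2 = c\lambda$ for a unit $c$, giving exactly the two idempotent-type solutions $\lambda \in \{0, c\}$ — equivalently the connection pair $(\nabla, \nabla^{*})$; in the dual-number case $\e^2 = 0$ the nilpotent kills the obstruction for a one-parameter family, contradicting "exactly two". If $\dim_{\R}\fA \geq 3$, a dimension count on the space of flat torsion-free $\fA$-module connections — or the appearance of a continuum of idempotent-indexed solutions in the semisimple case, and a nilpotent-parametrized family otherwise — again violates the count. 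I would phrase the "$\pm 1$" dichotomy by noting that a rank-two unital real algebra with a nontrivial generator $\e$ satisfying a quadratic $\e^2 = a + b\e$ can, after completing the square and rescaling, be normalized so that $\e^2 \in \{1, -1, 0\}$, and the value $0$ is eliminated by the "exactly two" hypothesis.

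The main obstacle I anticipate is making the reverse implication genuinely tight: it is easy to see that rank $2$ with $\e^2 = \pm 1$ \emph{gives at least} two flat connections (the structure-connection endpoints $\lambda = 0$ and the dual one), but showing there are \emph{no others} — and, conversely, that rank $\geq 3$ or the dual numbers \emph{always} breaks the count — requires controlling the full solution variety of the curvature equation $R_\lambda \equiv 0$ as $\lambda$ ranges over all of $\fA$ (not just real scalars) and as the connection ranges over all $\fA$-compatible affine ones, not merely the pencil. I would handle this by first proving that every flat torsion-free $\fA$-module connection on an affine symmetric space over $\fA$ must lie in the pencil $\nabla_\lambda$ (using the parallel-curvature and symmetry constraints to rigidify the difference tensor), reducing everything to the scalar equation in $\lambda \in \fA$, and then invoking the structure theory of $2$-dimensional real algebras together with Proposition~\ref{P:Vclass} to finish. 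The Jordan-algebra hypothesis is used exactly once but crucially: it is what guarantees the linear-in-$\lambda$ part of the curvature vanishes, so that the obstruction is the clean quadratic $\lambda^2 \cdot (\text{associator})$.
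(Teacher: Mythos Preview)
The paper does not actually prove this proposition: it is quoted from \cite{CoMa} and used as a black box. The only trace of the underlying argument appears inside the proof of Theorem~\ref{T:paramfd}, where the connection coefficients are written as $\Gamma^{i}_{jk} = \Gamma^{i\alpha}_{jk} e_\alpha \in A$ and it is observed that ``these objects are indexed by the number of generators of the algebra $A$'', so the existence of exactly two connections forces $\alpha$ to range over two values, i.e.\ the algebra has rank~$2$. That is the Norden--Shirokov mechanism: a single $A$-valued affine connection on a space over $A$ decomposes, in the real realisation, into $\dim_{\R} A$ real connections, one per basis element $e_\alpha$. The count of flat connections \emph{is} the rank, by construction; the exclusion of the dual-number case $\e^2=0$ is handled separately (in this paper, inside the proof of Lemma~\ref{L:2}).

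Your route is genuinely different and, as written, has a gap at the counting step. You try to recover the count from the structure-connection pencil $\nabla_\lambda = \nabla_0 + \lambda\,(\cdot\circ\cdot)$ by letting $\lambda$ range over $\fA$ and solving $R_\lambda = 0$ inside $\fA$. Two things go wrong. First, you never establish that every flat torsion-free connection on the space lies in this pencil; you flag this as ``the main obstacle'' but the rigidity argument you sketch (parallel curvature plus symmetry forcing the difference tensor into the pencil) is not carried out, and in the Norden--Shirokov setting the relevant connections are the real components of one $A$-valued connection, not deformations along the Jordan product. Second, the curvature analysis is internally inconsistent: you first claim $R_\lambda = \lambda^2\cdot(\text{associator})$ with the linear-in-$\lambda$ term killed by WDVV, which over a field would force the single solution $\lambda=0$, and then, without justification, switch to an equation $\lambda^2 = c\lambda$ in order to produce two solutions. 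WDVV is precisely associativity of $\circ$; for a non-associative Jordan algebra the associator does not vanish and the Jordan identity alone does not kill the linear term. So the step that is supposed to deliver ``exactly two'' is not actually supported by your computation.
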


\begin{proof}[of Proposition \ref{P:1}]
Let $\cC\subset \mathcal{W}$ be the cone of positive measures.  Let $\cH\cap \cC\neq \emptyset$. By Theorem \ref{T:+++}, $\cC$ is a cone over $\fA$.  So, the fourth Frobenius manifold which corresponds to a section of the cone $\cC$ inherits the paracomplex structure. 
We have shown in Lemma~\ref{L:2} that the tangent space to this $F$-manifold is identified to a module over an algebra of rank 2. Applying the proposition \ref{P:rank2}, this algebra is a paracomplex one. \end{proof}

\subsection{Paracomplex manifolds}
  
In this subsection, we expose paracomplex manifolds and their properties. This will be used to prove theorem \ref{T:paramfd}.
Let $y=f(x)$ be a (analytic) function, whose domain and range belong to a commutative algebra (i.e. $C_{jk}^h=C_{kj}^h$). We put $x=\sum_ix_ie_i,$ $y=\sum_iy_ie_i.$
From the generalized Cauchy--Riemann we have the following:
\[\sum_h\frac{\partial y_i}{\partial x_h}C_{jk}^h=\sum_h\frac{\partial y_h}{\partial x_i}C_{hk}^j,\]
where $C_{jk}^h$ are the constant structures. 

\vskip.2cm

 A {\it paracomplex manifold} is a real manifold $M$ endowed with a paracomplex structure $\fK$ that admits an atlas of paraholomorphic coordinates (which are functions with values in the algebra $\fA = \R + \e\R$ defined above), such that the transition functions are paraholomorphic.

Explicitly, this means the existence of local coordinates $(z_+^\alpha, z_-^\alpha),\, \alpha = 1\dots, m$  such that
paracomplex decomposition of the local tangent fields is of the form
\[
T^{+}M=span \left\{ \frac{\partial}{\partial z_{+}^{\alpha}},\, \alpha =1,...,m\right\} ,
\]
\[
T^{-}M=span \left\{\frac{\partial}{\partial z_{-}^{\alpha}}\, ,\, \alpha =1,...,m\right\} .
\]
Such coordinates are called {\it adapted coordinates} for the paracomplex structure $\fK$.

By abuse of notation, we write $\partial_z$ instead of $\frac{\partial}{\partial z^{\alpha}}$.

  We associate with any adapted coordinate system $(z_{+}^{\alpha}, z_{-}^{\alpha})$ a paraholomorphic coordinate system $z^{\alpha}$ by 
\[
z^\alpha\, =\, \frac{z_{+}^{\alpha}+z_{-}^{\alpha}}{2} +\e\frac{z_{+}^{\alpha}-z_{-}^{\alpha}}{2}, \alpha=1,...,m .
\]

We define the paracomplex tangent bundle as the $\R$-tensor product $T^\fA M = TM \otimes \fA$ and we extend the endomorphism $\fK$ to a $\fA$-linear endomorphism of $T^\fA M$. For any $p \in M$, we have the following decomposition of $T_{p}^\fA M$:
\[
T_p^\fA M=T_p^{1,0}M \oplus T_p^{0,1}M\,
\]
where 
\[
T_p^{1,0}M = \{v\in T_p^\fA M | \fK v=\e v\}=\{v+\e \fK v| v \in E^{2m}\} ,
\] 
\[
T_p^{0,1}M = \{v\in T_p^\fA M | \fK v= -\e v\}=\{v-\e \fK v|v\in E^{2m}\}
\]
are the eigenspaces of $\mathfrak{K}$ with eigenvalues $\pm \e$.
The following paracomplex vectors 
\[
\frac{\partial}{\partial z_{+}^{\alpha}}=\frac{1}{2}\left(\frac{\partial}{\partial x^{\alpha}} + \e\frac{\partial}{\partial y^{\alpha}}\right),\quad \frac{\partial}{\partial{z}_{-}^{\alpha}}=\frac{1}{2}\left(\frac{\partial}{\partial x^{\alpha}} - \e\frac{\partial}{\partial y^{\alpha}}\right)
\]
form a basis of the spaces $T_p^{1,0}M$ and $T_p^{0,1}M$.
\subsection*{Proofs of Theorem~\ref{T:paramfd} and Proposition~\ref{P:Pierce}}
\begin{proof}[{\it of Theorem \ref{T:paramfd}}]
We rely on the above subsection on paracomplex manifolds. Consider an affine and symmetric space over a Jordan algebra $A$ and consider $S$ as defined in the Appendix. From \cite{No58,Am85,Am97,BuNen1,BuNen2}, it is shown that there exist 2 connections. Applying Proposition\, \ref{P:rank2}, we have that the algebra is of rank 2, and generated by $\{1, \e\}$ with $\e^2= 1$ or $-1$. These flat affine connections are constructed from a field of objects, having the components: 
\[\Gamma_{jk}^{i}=\Gamma_{jk}^{i \alpha}e_{\alpha} \in A.\]

Suppose that $\bv^{i}=\bv^{(i,\alpha)}e_{\alpha}$ are quantities from the algebra corresponding to a tangent vector $\bv$ to $S$. 
Then, from the following condition
\[d\bv^{i} + \Gamma^{i}_{j k}\bv^{j} d\bx^{k}= 0,\]
we can define an affine connection equipped with the following components:
\[\Gamma^{(i,\alpha)}_{(j,\beta)(k,\gamma)}=\Gamma_{j k}^{i s}C_{s \beta}^\delta C^{\alpha}_{\delta \gamma} ,\]
where the $C_{ \beta \gamma}^{\alpha}$ are structure constants of algebra $A$, with respect to the local adapted coordinates $x^{(\alpha,i)}$. 
Now, these objects are indexed by the number of generators of the algebra $A$. There exist 2 connections. So, it implies that $s\in\{1,2\}$ and $A$ is of rank 2, generated by $\{1,\e\, |\, \e^2=\pm1\}$. Since, we cannot be in a complex framework, we have an algebra of paracomplex numbers.
\end{proof}
 
\begin{proof}[{\it  of Proposition \ref{P:Pierce}}]
By theorem\, \ref{T:paramfd} we know that $S$ (as defined in the Appendix) is a paracomplex manifold. 
Applying Peirce's decomposition theorem, we can decompose the algebra of paracomplex numbers using the pair of ideals which are orthogonal idempotents.
This implies two things. Firstly, it implies that the manifold can be decomposed into a pair of submanifolds, being respectively the realisation of the pair of modules defined over the pair of orthogonal, idempotent ideals of $\fA$. Following Shurygin's statements in~\cite{Shu}, we have that these submanifolds have flat connections and are totally geodesic.  

Secondly, relying on \cite{MC04}, Section. 8.1, we have fulfilled the necessary conditions for the existence of a Peirce reflection. The construction is as follows. One submanifold corresponding to a module over one idempotent ideal is the reflection of the other submanifold, corresponding to the module over the orthogonal idempotent, under Peirce's mirror. \end{proof}
\section{Conclusion}
 We can now proceed to discussing the final steps proving the main theorem presented at the beginning of the paper. 
The results are summarised in the table of corollary 1. The classical setting for statistical manifolds, under consideration in this paper is given in the Appendix.\begin{theorem}[{\bf Main Theorem}]
 The statistical manifold related to exponential families satisfies the following properties.
\begin{enumerate}
\item It is identified to a projective manifold, which is defined over a rank two Frobenius algebra $\fA$. This Frobenius algebra is a group ring of a finite dimension 2 group, over the field of real numbers, generated by \[\langle 1,\ \e\, |\, \e^{2}=1\rangle.\] 
\item There exists a decomposition of the fourth $F$-manifold into a pair of totally geodesic submanifolds\footnote{Let $M$ be a manifold and $g$ a Riemannian metric.  A submanifold $N$ of a Riemannian manifold $(M,g)$ is called {\it totally geodesic} if any geodesic on the submanifold $N$ with its induced Riemannian metric $g$ is also a geodesic on the Riemannian manifold $(M,g)$. For more details, we refer to \cite{KoNo}.
}, containing  a pair of flat connections. These correspond to the realisations of the module over the maximal ideal of $\fA$ and its annihilator.
\item This pair of pseudo-Riemannian submanifolds of the fourth $F$-manifold are symmetric to each other, with respect to a Peirce reflection.
\end{enumerate}
\end{theorem}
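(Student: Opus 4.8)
The plan is to assemble the Main Theorem directly from the building blocks already proved in the preceding sections, checking only that each clause of the statement is covered and that the pieces fit together without circularity. The three clauses correspond, respectively, to Theorem~\ref{T:paramfd}, to Lemma~\ref{c:manif} together with Lemma~\ref{T:3}, and to Proposition~\ref{P:Pierce}; the only new content is the identification of $\fA$ as the group ring of the order-two group over $\R$, and the matching of ``maximal ideal and its annihilator'' with the pair of idempotent-generated ideals appearing in Lemma~\ref{c:manif}.

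First I would prove clause (1). By Lemma~\ref{L:2} the tangent space of the fourth $F$-manifold is the real realisation of a module over the spin factor algebra, and by Proposition~\ref{P:1} and Theorem~\ref{T:paramfd} the manifold itself is a projective paracomplex manifold, i.e. a projective manifold over $\fA$. It then remains to observe that $\fA = \R\langle 1,\e \mid \e^2 = 1\rangle$ is precisely the group ring $\R[\Z/2\Z]$: if $\Z/2\Z = \{1,g\}$, the assignment $g \mapsto \e$ extends to an $\R$-algebra isomorphism $\R[\Z/2\Z] \xrightarrow{\sim} \fA$, since both are two-dimensional with the same multiplication table. That $\fA$ is Frobenius is standard (a finite group ring over a field always is, the Frobenius form being the coefficient of the identity element), and was already recorded in the introduction. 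This disposes of (1).

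Next I would prove clause (2). Lemma~\ref{c:manif} gives the decomposition of the fourth $F$-manifold into a direct sum of two totally geodesic submanifolds, realised as a cartesian product of modules over the two ideals of $\fA$ generated by the orthogonal idempotents $e_- = \tfrac{1-\e}{2}$ and $e_+ = \tfrac{1+\e}{2}$. One checks that each of these ideals $\fA e_\pm$ is a maximal ideal of $\fA$ (the quotient $\fA/\fA e_\pm \cong \R$ is a field) and that each is the annihilator of the other, since $e_+ e_- = 0$ and $\fA = \fA e_+ \oplus \fA e_-$; this is the content of the Remark following the Main theorem in the introduction. By Lemma~\ref{T:3} (via the \v Centsov identification of these submanifolds with exponential families carrying local parametrisations, and the computations of \cite{BuNen1,BuNen2}) each submanifold carries a flat connection, $\nabla$ on one and $\nabla^*$ on the other, these being the conjugate connections of the statistical structure $(g,\nabla,\nabla^*)$. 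Assembling these gives exactly clause (2). Finally, clause (3) is Proposition~\ref{P:Pierce} verbatim: having established in Theorem~\ref{T:paramfd} that we work over a Jordan algebra of rank two containing the idempotents $e_\pm$, the involution $u = e_+ - e_-$ (which is just $\e$, since $e_+ - e_- = -\e$, up to sign) satisfies $u^2 = 1$, so $U_u$ is an involutive automorphism of $\fA$ by \cite[Lemma 6.10]{MC04}, and its Peirce-reflection description $U_u = E_2 - E_1 + E_0$ exchanges the two idempotent ideals and hence the two submanifolds.

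\textbf{Main obstacle.} The routine assembly is easy; the delicate point is consistency of the rank-two input. Theorem~\ref{T:paramfd} and Proposition~\ref{P:rank2} only conclude that the algebra is generated by $\{1,\e\}$ with $\e^2 = \pm 1$, and one must invoke the statistical (positive-definiteness) structure — more precisely the appearance of the paracomplex Vinberg cone in Theorem~\ref{T:+++} — to rule out $\e^2 = -1$ and the dual-number degeneration $\e^2 = 0$, as done in the proof of Lemma~\ref{L:2}. I would make sure this elimination is stated once, cleanly, and then reused, rather than re-derived in each clause; this is where a careless write-up could introduce a gap or an apparent circularity between the ``there exist exactly two flat connections'' hypothesis and the conclusions about $\nabla,\nabla^*$.
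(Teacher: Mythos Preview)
Your proposal is correct and follows essentially the same assembly as the paper's own proof, which simply invokes Lemma~\ref{L:2} and Theorem~\ref{T:paramfd} for (1), Lemma~\ref{c:manif} and Lemma~\ref{T:3} for (2), and Proposition~\ref{P:Pierce} for (3). Your write-up is more explicit (the group-ring identification, the maximal-ideal/annihilator check, and the involution $u$) but adds no new route; one tiny slip: $e_+ - e_- = \e$, not $-\e$, though as you note the sign is irrelevant since $u^2=1$ either way.
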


\begin{proof}
The part (1) is proved by applying lemma \ref{L:2} and theorem\, \ref{T:paramfd}. It turns out that the statistical manifold related to exponential families is a paracomplex manifold. 

The part (2) follows from lemma \ref{c:manif} and theorem \ref{T:3}. 

The part (3) follows from proposition \ref{P:Pierce}.
\end{proof}

\appendix
\section{Appendix: Statistical manifolds}
\subsection*{Probabilistic setup}
Let $(\Omega,\mathcal{F})$ be a measure space, where $\mathcal{F}$ denotes the $\sigma$-algebra of elements of $\Omega$. 
We consider a family of parametric probabilities  $\frak{S}$ on the measure space $(\Omega,\mathcal{F})$. We ask all parametric probabilities of $\frak{S}$ to be {\it absolutely continuous} w.r to a $\sigma$-finite measure $\lambda $ i.e. a measure $P \in (\frak{S},\mathcal{F})$ is absolutely continuous w.r to $\lambda$ if for every measurable set $A\subset \mathcal{F}$:
\[\lambda(A)=0 \Rightarrow P(A)=0, \quad \forall A\subset \mathcal{F}, \]which we denote by $P\ll \lambda$. Note that
$P \ll \lambda$  does not imply that $\lambda \ll P$.

If $\lambda$ is  positive and $\sigma$-finite there exists a measurable function $\rho$ called density of the measure $P$ w.r to the measure $\lambda$, denoted 
\[\rho= \frac{dP}{d\lambda},\quad \quad P(A)=\int_{A}\rho d\lambda,\  \forall A\subset \mathcal{F}. \]
We denote by $S$ the associated family of probability densities of the parametric probabilities. We limit ourselves to the case where $S$ is a smooth topological manifold. 

\[S= \left\{ \rho_{\theta} \in L^1(\Omega, \lambda),\  \theta = \{\theta_1,\dots \theta_n\}; \ \rho>0\  \lambda - a.e., \ \int_{\Omega}\rho d\lambda=1\right\}.\]

This generates the space of probability measures absolutely continuous  with respect to the measure $\lambda$, i.e.
$P_{\theta}(A)=\int_A  \rho_{\theta}d\lambda$ where $A\subset \mathcal{F}$.

We construct its tangent space as follows. Let $u\in L^{2}(\Omega, P_{\theta})$ be a tangent vector to $S$ at the point $P_{\theta}$. 
\[T_{\theta}=\left\{ u\in L^{2}(\Omega, P_{\theta}); \mathbb{E}_{P_{\theta}}[u]=0, u=\sum_{i=1}^{d}u^i\partial_{i}\ell_{\theta}         \right\},\] 
where $\mathbb{E}_{P_{\theta}}[u]$ is the expectation value, w.r. to the probability $P_{\theta}$.

The tangent space of $S$  is isomorphic to the $n$-dimensional linear space generated by the centred random variables (also known as score vector) $\{ \partial_{i}\ell_\theta\}_{i=1}^n$, where $\ell_{\theta} = \ln \rho_{\theta}.$ 

\begin{remark}
The elements of $T_\theta$ generate the family  of signed measures with bounded variations (i.e. signed measures whose total variation $ \Vert \mu \Vert =|\mu |(X) $ is bounded, vanishing only on an ideal $\mathcal{I}$ of the $\sigma$-algebra $\mathcal{F}$) which are absolutely continuous  with respect to  $P_\theta$ and such that $\int _{\Omega} u dP_{\theta}=0$. This forms a real affine space. \end{remark}
 In 1945, Rao \cite{Rao45} introduced the Riemannian metric on a statistical manifold, using the Fischer information matrix. The statistical manifold forms a (pseudo)-Riemannian manifold. 

In the basis, where $\{ \partial_{i}\ell_\theta\}, \{i=1,\dots,n\}$ where $\ell_{\theta} = \ln \rho_{\theta},$  the Fisher metric are just the {\it covariance matrix} of the score vector. In particular, we have:

\[  g_{i,j}(\theta)=\mathbb{E}_{P_{\theta}}[\partial_i\ell_{\theta}\partial_{j}\ell_{\theta}]\]
\[  g^{i,j}(\theta)=\mathbb{E}_{P_{\theta}}[a^{i}_{\theta}a^{j}_{\theta}], \]

where  $\{a^{i}\}$ form a dual basis to $\{\partial_j\ell_{\theta}\}$:
\[a^{i}_{\theta}(\partial_j\ell_{\theta})=\mathbb{E}_{P_{\theta}}[a^{i}_{\theta}\partial_j\ell_{\theta}]=\delta^i_{j}\]
with
\[\mathbb{E}_{P_{\theta}}[a^{i}_{\theta}]=0.\]

\end{document}